\theoremstyle{plain}
\theoremstyle{definition}
\newtheorem{theorem}{Theorem}[section]\newtheorem{lemma}[theorem]{Lemma}\newtheorem{proposition}[theorem]{Proposition}\newtheorem{corollary}[theorem]{Corollary}\newtheorem{example}[theorem]{Example}
\def\Ocal{\mathcal{O}}
\def\Cbb{\mathbb{C}}\def\Qbb{\mathbb{Q}}\def\Zbb{\mathbb{Z}}
\def\Sfrak{\mathfrak{S}}
\def\veps{\varepsilon}
\def\pr{\prime}\def\ra{\rightarrow}
\newcommand{\N}{\mathbb{N}}
\DeclareMathOperator{\amaj}{amaj}
\DeclareMathOperator{\col}{col}
\DeclareMathOperator{\DEF}{def}
\DeclareMathOperator{\End}{End}
\DeclareMathOperator{\fmaj}{fmaj}
\DeclareMathOperator{\GL}{GL}
\DeclareMathOperator{\ind}{ind}
\DeclareMathOperator{\inv}{inv}
\DeclareMathOperator{\maj}{maj}
\DeclareMathOperator{\nneg}{neg}
\DeclareMathOperator{\Neg}{Neg}
\DeclareMathOperator{\reg}{reg}
\DeclareMathOperator{\refl}{refl}
\DeclareMathOperator{\rot}{rot}
\DeclareMathOperator{\sign}{sign}
\DeclareMathOperator{\sneg}{sneg}
\DeclareMathOperator{\stat}{stat}
\DeclareMathOperator{\triv}{triv}
\begin{document}

\title{Determinantal formulas with major indices}
\author{Thomas McConville, Donald Robertson, Clifford Smyth}
\date{\today}

\maketitle

\begin{abstract}
We give a simple proof of a major index determinant formula in the symmetric group discovered by Krattenthaler and first proved by Thibon using noncommutative symmetric functions. We do so by proving a factorization of an element in the group ring of the symmetric group. By applying similar methods to the groups of signed permutations and colored permutations, we prove determinant formulas in these groups as conjectured by Krattenthaler.
\end{abstract}

\section{Introduction}

Let $\Sfrak_n$ be the group of permutations of $[n]:=\{1,\ldots,n\}$. We often consider permutations in one--line notation $w=w_1\cdots w_n$ where $w_i=w(i)$. An integer $i\in[n-1]$ is a \emph{descent} of a permutation $w$ if $w_i>w_{i+1}$. The \emph{major index} $\maj(w)$ is the sum of the descents of $w$. For example, $\maj(314652)=1+4+5=10$. The \emph{major index matrix} is $(q^{\maj(uv^{-1})})_{u,v\in\Sfrak_n}$. In his survey of determinantal formulas, Krattenthaler discovered and communicated a proof by Thibon of the following identity.

\begin{theorem}[Theorem 56, \cite{krattenthaler2001advanced}]\label{thm:main_maj}
  For all $n\geq 1$,
\[ \det\left( q^{\maj(uv^{-1})} \right)_{u,v\in\Sfrak_n} = \prod_{k=2}^{n}(1-q^{k})^{n!\cdot (k-1)/k }.  \]  
\end{theorem}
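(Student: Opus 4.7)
The plan is to implement the strategy suggested by the abstract: factor the group-ring element $F := \sum_{w \in \Sfrak_n} q^{\maj(w)} w \in \mathbb{Z}[q][\Sfrak_n]$ into pieces that each lie in a cyclic subalgebra, and then read the determinant off the regular representation. For $2 \le k \le n$, set $c_k := s_{k-1} s_{k-2} \cdots s_1$, which is a $k$-cycle in $\Sfrak_k \subseteq \Sfrak_n$, and put
\[
F_k := \sum_{j=0}^{k-1} q^j c_k^j.
\]
The core claim is the group-ring identity
\[
\sum_{w \in \Sfrak_n} q^{\maj(w)}\, w \;=\; F_n F_{n-1} \cdots F_2,
\]
which one may verify directly for small $n$.

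To prove this identity I would argue by induction on $n$. Expanding the right-hand side gives a sum of $n!$ terms of the form $q^{j_n + \cdots + j_2}\, c_n^{j_n} c_{n-1}^{j_{n-1}} \cdots c_2^{j_2}$ indexed by tuples $(j_n,\ldots,j_2)$ with $0 \le j_k < k$, matching $|\Sfrak_n|$. So two things must be verified: (a) these tuples biject with $\Sfrak_n$ via the product rule, and (b) the resulting permutation has $\maj$ equal to $j_n + \cdots + j_2$. Part (a) is straightforward: any $w \in \Sfrak_n$ decomposes as $w = c_n^{j_n} u$ with $u \in \Sfrak_{n-1} \subseteq \Sfrak_n$, where $j_n$ is forced by $w(n) = c_n^{j_n}(n) = n - j_n$ and then $u := c_n^{-j_n} w$ fixes $n$ and falls under the inductive hypothesis. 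The inductive content of (b) is the identity $\maj(c_n^j u) = j + \maj(u)$ for $u \in \Sfrak_{n-1} \subseteq \Sfrak_n$. In one-line notation, $c_n^j u$ has $n - j$ in the last position while the first $n-1$ entries are $u(1),\ldots,u(n-1)$ relabeled by subtracting $j$ modulo $n$; the work is to track how this relabeling reshapes the descent set so as to add exactly $j$ to the major index.

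Once the factorization is in hand, the determinant follows by the standard Frobenius group-matrix formalism. The given matrix is the matrix of left multiplication by $F$ on the regular representation $\mathbb{C}[\Sfrak_n]$, so
\[
\det\bigl(q^{\maj(uv^{-1})}\bigr)_{u,v \in \Sfrak_n} \;=\; \prod_{k=2}^n \det(L_{F_k}),
\]
where $L_{F_k}$ denotes left multiplication by $F_k$. Since $F_k \in \mathbb{C}[\langle c_k\rangle]$, each right coset $\langle c_k\rangle v$ is $L_{F_k}$-stable, and the regular representation of $\Sfrak_n$ decomposes as a sum of $n!/k$ copies of the regular representation of $\mathbb{Z}/k$. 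Diagonalizing by the characters $\zeta$ of $\mathbb{Z}/k$ yields
\[
\det(L_{F_k}) \;=\; \left(\prod_{\zeta^k = 1} \sum_{j=0}^{k-1} (q\zeta)^j\right)^{\!n!/k} \;=\; \left(\prod_{\zeta^k = 1} \frac{1 - q^k}{1 - q\zeta}\right)^{\!n!/k} \;=\; (1 - q^k)^{(k-1)n!/k},
\]
using $\prod_{\zeta^k = 1}(1 - q\zeta) = 1 - q^k$. Taking the product over $k = 2,\ldots,n$ recovers the right-hand side of Theorem \ref{thm:main_maj}.

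The main obstacle is part (b). The bijection of part (a) is essentially the classical Steinberg/Lehmer-code coset decomposition, which is normally phrased in terms of the inversion statistic $\inv$; the substance of the proof is showing that this same noncommutative decomposition is also compatible with $\maj$. Everything else in the argument is either standard representation theory or a direct geometric-series computation.
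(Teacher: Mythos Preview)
Your proposal is correct and follows essentially the same route as the paper: your cycles $c_k=s_{k-1}\cdots s_1$ coincide with the paper's $t_k=(k,k-1,\ldots,1)$, your factorization $F=F_nF_{n-1}\cdots F_2$ is precisely the perfect-basis factorization of Lemma~\ref{lem:factor_basis}, and your coset/eigenvalue computation of $\det(L_{F_k})$ is equivalent to Corollary~\ref{cor:reg_basis}. The step you single out as the main obstacle, $\maj(c_n^j u)=j+\maj(u)$ for $u\in\Sfrak_{n-1}$, is exactly the content of Lemma~\ref{lem:maj_basis} (due to Adin--Roichman), which the paper proves by a closely related descent-tracking argument, building the product from the left rather than inducting on $n$.
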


\begin{example}
  The identity is trivial if $n=1$. For $n=2$, Theorem~\ref{thm:main_maj} gives
  \[ \det\left(\begin{matrix}1 & q\\q & 1\end{matrix}\right) = (1-q^2). \]
For $n=3$, if we index the rows and columns by $(123,\ 132,\ 213,\ 231,\ 312,\ 321)$, then 
\[ \det\left(\begin{matrix}
  1 & q^2 & q & q & q^2 & q^3\\
  q^2 & 1 & q & q & q^3 & q^2\\
  q & q^2 & 1 & q^3 & q^2 & q\\
  q^2 & q & q^3 & 1 & q & q^2\\
  q & q^3 & q^2 & q^2 & 1 & q\\
  q^3 & q & q^2 & q^2 & q & 1
  \end{matrix}\right) = (1-q^2)^3(1-q^3)^4. \]
\end{example}

To prove Theorem~\ref{thm:main_maj}, Thibon explicitly determined the eigenvalues of the major index matrix with multiplicity using the theory of noncommutative symmetric functions developed in \cite{krob1997noncommutative}. Stanley also determined the eigenvalues with multiplicity in \cite[Theorem 2.2]{stanley2001generalized} by applying a theorem of Bidigare, Hanlon, and Rockmore \cite[Theorem 1.2]{bidigare1999combinatorial}.

In this paper, we present a new, simpler proof of Theorem~\ref{thm:main_maj}. Our proof relies on a clever interpretation of the major index of a permutation given by Adin and Roichman in \cite{adin2001flag}, which we recall here. Let $t_k=(k,k-1,\dots,1)$ be a $k$-cycle for $2\leq k\leq n$. Each $w$ in $\Sfrak_n$ can be uniquely expressed in the form $t_n^{c_n}t_{n-1}^{c_{n-1}}\cdots t_2^{c_2}$ where $0\leq c_k < k$, and the major index of $w$ is $c_n+c_{n-1}+\cdots+c_2$. This means that the sequence $(t_n,t_{n-1},\ldots,t_2)$ is a perfect basis of $\Sfrak_n$, the definition of which we recall in Section~\ref{sec:bases}. This perfect basis determines a factorization of the major index matrix, which we use to evaluate its determinant in Section~\ref{sec:maj}.

Our proof of Theorem~\ref{thm:main_maj} was motivated by Zagier's proof \cite{zagier1992realizability} of the identity
\[ \det\left(q^{\inv(uv^{-1})}\right)_{u,v\in\Sfrak_n} = \prod_{k=2}^{n}(1-q^{k^2-k})^{n!\cdot(n-k+1)/(k^2-k)}, \]
where $\inv(w)$ is the number of inversions of a permutation $w$. Zagier considered an element of the group algebra $\Cbb(q)\Sfrak_n$ whose image under the regular representation is the matrix $(q^{\inv(uv^{-1})})$. By factoring this element of the group algebra, he obtained a corresponding factorization of the matrix $(q^{\inv(uv^{-1})})_{u,v \in \Sfrak_n}$ for which the determinants of the factors could be readily evaluated.

A \emph{colored permutation} $(w,x)$ consists of a permutation $w\in\Sfrak_n$ and $x~\in~(\Zbb/m\Zbb)^n$. We recall the group structure on colored permutations in Section~\ref{sec:fmaj}. When $m=2$, this group is isomorphic to the group of signed permutations, the real reflection group of type $B_n$. 

Based on extensive computational evidence, Krattenthaler conjectured in \cite{krattenthaler2005advanced} several analogues of Theorem~\ref{thm:main_maj} for colored permutations using variations on the major index given in \cite{adin2001descent,adin2001flag,reiner1993signed}. We prove all of his conjectured formulas and more in Sections~\ref{sec:fmaj},~\ref{sec:amaj}, and~\ref{sec:signed}. For each case, we construct a basis such that the relevant statistics can be read from the exponent vectors.

Theorem~\ref{thm:main_maj} and the various extensions we consider for colored or signed permutations are all specializations of group determinants. For a finite group $G$, its \emph{group determinant} is $\det(r_{gh^{-1}})_{g,h\in G}$ where $\{r_g\mid g\in G\}$ is a set of elements of a commutative ring. In his pioneering work on the representation theory of finite groups, Frobenius proved that if $\{r_g\mid g\in G\}$ is a set of indeterminates in a polynomial ring over $\Cbb$, then the irreducible factors of the group determinant naturally correspond to irreducible representations of $G$; see \cite{hawkins1978creation}.

We consider examples of the group determinant of the form $\det(q^{\stat(gh^{-1})})_{g,h\in G}$ for some statistic on $G$. We are especially intrigued by examples for which this determinant is a product of binomials. This behavior was proved for the length statistic on finite Coxeter groups in \cite{varchenko1993bilinear}, extending the aforementioned result from \cite{zagier1992realizability}.

The rest of this paper is structured as follows. Preliminary results on group determinants and perfect bases are given in Sections~\ref{sec:group_det} and~\ref{sec:bases}. Theorem~\ref{thm:main_maj} is proved in Section~\ref{sec:maj}. In Section~\ref{sec:fmaj}, we recall the flag major index on colored permutations introduced by Adin and Roichman in~\cite{adin2001flag} and prove a formula for the corresponding group determinant, answering \cite[Problem 49]{krattenthaler2005advanced}. In Section~\ref{sec:amaj}, we consider another statistic on colored permutations that we call the absolute flag major index, and we generalize and prove \cite[Conjecture 48]{krattenthaler2005advanced}. Finally, in Section~\ref{sec:signed} we prove an identity from which we can derive proofs of Conjectures~46,~47, and~50 in \cite{krattenthaler2005advanced}.

\section{Group determinants}\label{sec:group_det}

Let $G$ be a finite group and let $R$ be a commutative ring with $1$. The \emph{group ring} $RG$ is the free $R$-module with a distinguished basis that we identify with the elements of $G$. Multiplication of basis elements is the same as in $G$ and is extended linearly to $RG$.

Given a complex vector space $V\cong\Cbb^m$, a \emph{representation} is a group homomorphism $G\ra\GL(V)$, which we may extend to a ring homomorphism $RG\ra\End(V)$. For any element $\alpha=\sum_{g\in G}r_g g\in RG$ and any representation $\phi$, we set
\[ \Delta_{\phi}(\alpha) := \det\phi(\alpha) = \det\left(\sum_{g\in G}r_g\phi(g)\right). \]

\begin{example}
\leavevmode
\begin{enumerate}
\item
If $\phi_{\triv}:G\ra\GL(\Cbb^1)$ is the trivial representation, then $\Delta_{\phi_{\triv}}(\alpha)=\sum_{g\in G}r_g$.
\item 
If $G$ acts on a finite set $X$, then the \emph{permutation representation} $\phi_X:G\ra\Cbb^X$ assigns to an element $g$ the transformation $x\mapsto g\cdot x$ for $x\in X$. 
So $\phi_X(g)$ is a permutation matrix, i.e. it is a $0,1$--matrix such that every row and column has exactly one $1$.
\item 
The \emph{regular representation} $\phi_{\reg}$ is the permutation representation induced by the action of $G$ on itself by left multiplication. 
The \emph{group determinant} of $G$ is
\[ \Delta_{\phi_{\reg}}(\alpha) = \det\left(r_{uv^{-1}}\right)_{u,v\in G} \]
where $\alpha=\sum r_g g$.
\end{enumerate}
\end{example}
Two representations $\phi,\eta$ are \emph{equivalent} if there exists an invertible matrix $U$ such that $\eta(g)=U\phi(g)U^{-1}$ for all $g\in G$. 
It is clear that $\Delta_{\phi}(\alpha)=\Delta_{\eta}(\alpha)$ whenever $\phi$ and $\eta$ are equivalent representations.
The direct sum $\phi\oplus\eta$ of two representations satisfies $\Delta_{\phi\oplus\eta}(\alpha) = \Delta_{\phi}(\alpha)\Delta_{\eta}(\alpha)$. 
As any representation is equivalent to a direct sum of irreducible representations, we can always factor the determinant $\Delta_{\phi}(\alpha)$ as a product over the irreducible direct summands of $\phi$.


For an $m\times m$ matrix $M$, let $\theta_M(q)=\det(I-qM)$. 
For permutation representations, we have the following result.

\begin{proposition}
\label{prop:det_permrep}
  Let $G$ act on a finite set $X$, and let $\phi=\phi_X$ be the corresponding permutation representation. Fix $g\in G$, and let $\Ocal_1,\ldots,\Ocal_N$ be the orbits of the cyclic subgroup $\langle g\rangle$. Then
  \[ \theta_{\phi(g)}(q) = \prod_{k=1}^N(1-q^{|\Ocal_k|}). \]
\end{proposition}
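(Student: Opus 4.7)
The plan is to reduce the computation of $\theta_{\phi(g)}(q)$ to a block–diagonal calculation by using the orbit decomposition of $X$ under $\langle g\rangle$, and then to evaluate the contribution of a single cyclic block directly.

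First I would observe that each orbit $\Ocal_k$ is $\langle g\rangle$–invariant, so choosing a basis of $\Cbb^X$ ordered orbit by orbit makes $\phi(g)$ block–diagonal with one block $M_k$ for each orbit. Since $\det(I - qM)$ is multiplicative for block–diagonal $M$, we have
\[ \theta_{\phi(g)}(q) = \prod_{k=1}^N \theta_{M_k}(q), \]
so it suffices to show that $\theta_{M_k}(q) = 1 - q^{|\Ocal_k|}$ for each $k$.

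Next I would fix one orbit $\Ocal_k$ of size $r = |\Ocal_k|$, pick any basepoint $x_0 \in \Ocal_k$, and order the basis as $x_0, g\cdot x_0, g^2\cdot x_0, \ldots, g^{r-1}\cdot x_0$. In this basis the matrix $M_k$ is the $r\times r$ cyclic shift with $1$'s on the subdiagonal and a $1$ in the top–right corner. The straightforward way to finish is to note that the eigenvalues of $M_k$ are the $r$-th roots of unity $\zeta^j$ for $j=0,\ldots,r-1$, so
\[ \theta_{M_k}(q) = \prod_{j=0}^{r-1}(1 - q\zeta^j) = 1 - q^r. \]
(Alternatively one can compute this determinant directly by cofactor expansion along the first row, which yields $1 - q^r$ in one line.) Substituting back into the product over $k$ gives the stated identity.

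There is no real obstacle here; the only point requiring care is to set up the basis so that the block decomposition and the explicit form of the cyclic block are transparent, after which the result follows from either the eigenvalue computation or a direct expansion of a $r\times r$ determinant.
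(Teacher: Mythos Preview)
Your argument is correct and is essentially the paper's own proof: block-diagonalize $\phi(g)$ along the $\langle g\rangle$-orbits, identify each block with the cyclic shift matrix $T_k$, and use $\det(I-qT_k)=1-q^{|\Ocal_k|}$. The only difference is that you spell out the eigenvalue (or cofactor) justification for that last determinant, whereas the paper states it without further comment.
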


\begin{proof}
  For $k\in[N]$, let $T_k$ be the $|\Ocal_k|\times|\Ocal_k|$ permutation matrix with $(i,j)$-entry equal to $1$ if $i=j+1\bmod{|\Ocal_k|}$. Up to a simultaneous permutation of rows and columns, the matrix $\phi(g)$ is the direct sum $T_1\oplus\cdots\oplus T_N$. Hence,
  \[ \theta_{\phi(g)}(q) = \det(I-q\phi(g)) = \prod_{k=1}^N\det(I-q T_k) = \prod_{k=1}^N(1-q^{|\Ocal_k|}).\qedhere \]
\end{proof}

Let $o(g)$ denote the order of an element $g$. In the regular representation, every orbit of $\langle g\rangle$ has size equal to $o(g)$. We immediately deduce the following corollary of Proposition~\ref{prop:det_permrep}.

\begin{corollary}\label{cor:det_reg}
  If $\phi=\phi_{\reg}$ is the regular representation of $G$, then
  \[ \theta_{\phi(g)}(q) = (1-q^{o(g)})^{|G|/o(g)}. \]
\end{corollary}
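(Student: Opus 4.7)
The plan is to apply Proposition~\ref{prop:det_permrep} directly, with $X = G$ and the action being left multiplication (which is the action defining the regular representation). So the only real content is to verify the sentence already stated in the paper: that every orbit of $\langle g\rangle$ acting on $G$ by left multiplication has size exactly $o(g)$.

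For this, I would fix any $h \in G$ and consider its orbit $\{h, gh, g^2h, \ldots\}$. If $g^i h = g^j h$ for some $0 \leq i < j < o(g)$, then multiplying on the right by $h^{-1}$ gives $g^{j-i} = e$ with $0 < j - i < o(g)$, contradicting the definition of $o(g)$. Hence the elements $e \cdot h, g \cdot h, \ldots, g^{o(g)-1} \cdot h$ are all distinct, and of course $g^{o(g)} h = h$, so the orbit of $h$ has size exactly $o(g)$. Since the orbits partition $G$, there are $N = |G|/o(g)$ of them, each of size $o(g)$.

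Substituting $|\mathcal{O}_k| = o(g)$ for all $k$ into the formula of Proposition~\ref{prop:det_permrep} immediately yields
\[ \theta_{\phi(g)}(q) = \prod_{k=1}^{|G|/o(g)} (1 - q^{o(g)}) = (1 - q^{o(g)})^{|G|/o(g)}, \]
as claimed. There is no real obstacle here; the statement is a one-line specialization of the preceding proposition once the orbit structure of left multiplication is recorded.
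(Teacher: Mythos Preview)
Your proposal is correct and matches the paper's approach exactly: the paper simply notes that every orbit of $\langle g\rangle$ on $G$ has size $o(g)$ and invokes Proposition~\ref{prop:det_permrep}, and you have spelled out that orbit count in slightly more detail.
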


\section{Perfect bases}\label{sec:bases}

We follow the terminology of \cite{Shwartz2007MajorIA} for bases of groups.

Let $G$ be a finite group. A sequence $(g_1,\ldots,g_n)$ of elements of $G$ is a \emph{basis} if there exist positive integers $m_1,\ldots,m_n$ such that every element $g\in G$ may be uniquely expressed in the form $g = g_1^{c_1}\cdots g_n^{c_n}$ where $0\leq c_i< m_i$ for $i\in[n]$. This basis is \emph{perfect} if $m_i=o(g_i)$ for all $i$. A group admits a perfect basis if and only if it may be identified as a set with a Cartesian product of cyclic groups by the following map.
\begin{align*}
  \langle g_1\rangle\times\cdots\times \langle g_n\rangle &\stackrel{\sim}{\to} G\\
  (g_1^{c_1},\ldots,g_n^{c_n}) &\mapsto g_1^{c_1}\cdots g_n^{c_n}
\end{align*}
This map is not necessarily a group isomorphism.

\begin{example}\label{ex:S3_basis}
  Let $G=\Sfrak_3$. Take $g_1=(123),\ g_2=(12)(3)$, written in cycle notation. Then $(g_1,g_2)$ is a perfect basis since every element of $\Sfrak_3$ is uniquely expressible in the form $g_1^{c_1}g_2^{c_2}$ for $0\leq c_1<3,\ 0\leq c_2<2$.
  \begin{center}
    \begin{tabular}{l l}
      \hspace{1cm} $g_1^0g_2^0 = (1)(2)(3)$ & \hspace{1cm} $g_1^0g_2^1 = (12)(3)$\\
      \hspace{1cm} $g_1^1g_2^0 = (123)$ & \hspace{1cm} $g_1^1g_2^1 = (13)(2)$\\
      \hspace{1cm} $g_1^2g_2^0 = (132)$ & \hspace{1cm} $g_1^2g_2^1 = (1)(23)$
    \end{tabular}    
  \end{center}

\end{example}

\begin{example}\label{ex:Dn_basis}
  We extend Example~\ref{ex:S3_basis} to any dihedral group of order $2n$ for $n\geq 3$. Consider the dihedral group $G$ of isometries of a regular $n$--gon whose vertices are labeled $1,2,\ldots,n$ in clockwise order. As a group of permutations of $[n]$, $G$ is generated by a rotation $g_1=(123\cdots n)$ and a reflection $g_2=(1,n-1)(2,n-2)\cdots(\lfloor\frac{n-1}{2}\rfloor,\lfloor\frac{n+2}{2}\rfloor)(n)$. Every rotation symmetry is of the form $g_1^k$ for some $0\leq k<n$, and every reflection symmetry is of the form $g_1^kg_2$ for some $0\leq k<n$. Hence, $(g_1,g_2)$ is a perfect basis.
\end{example}

For the remainder of this section, we let $R=\Qbb[x_1,\ldots,x_n]$ and we consider the group ring $RG$.

\begin{lemma}\label{lem:factor_basis}
  If $(g_1,\ldots,g_n)$ is a basis of $G$, then we have the identity
  \[ \sum_{\substack{g=g_1^{c_1}\cdots g_n^{c_n}\\0\leq c_i<m_i}} x_1^{c_1}\cdots x_n^{c_n}\cdot g = \prod_{i=1}^n (1+x_ig_i+\cdots+x_i^{m_i-1}g_i^{m_i-1}). \]
  If $(g_1,\ldots,g_n)$ is a perfect basis, then
\begin{equation}
\label{eqn:alphaBetaFactor}
\prod_{i=1}^n (1+x_ig_i+\cdots+x_i^{m_i-1}g_i^{m_i-1}) (1-x_ig_i) = \prod_{i=1}^n(1-x_i^{m_i})\cdot 1. 
\end{equation}
\end{lemma}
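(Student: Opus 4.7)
The plan is to prove the two identities in succession, with the first serving essentially as a warm-up and the second exploiting a telescoping product together with the defining property of a perfect basis.

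For the first identity, I would expand the product on the right-hand side by distributivity. Each term in the expansion has the form $(x_1^{c_1}\cdots x_n^{c_n})(g_1^{c_1}\cdots g_n^{c_n})$ for some tuple $(c_1,\ldots,c_n)$ with $0\leq c_i<m_i$, using that the scalars $x_i$ lie in $R$ and hence are central in $RG$. Because $(g_1,\ldots,g_n)$ is a basis of $G$, the map $(c_1,\ldots,c_n)\mapsto g_1^{c_1}\cdots g_n^{c_n}$ is a bijection onto $G$, so indexing the terms by the resulting group element $g$ recovers the left-hand side.

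For the second identity, the key observation is that each paired factor $(1+x_ig_i+\cdots+x_i^{m_i-1}g_i^{m_i-1})(1-x_ig_i)$ involves only powers of a single generator $g_i$ and the central scalar $x_i$. Setting $y_i=x_ig_i$, this factor is the telescoping product
\[ (1+y_i+\cdots+y_i^{m_i-1})(1-y_i) = 1-y_i^{m_i} = 1-x_i^{m_i}g_i^{m_i}. \]
Since the basis is perfect, $m_i=o(g_i)$, so $g_i^{m_i}=1$ and this paired factor collapses to the scalar $1-x_i^{m_i}\in R$. Once each pair has been simplified to a scalar, the remaining product over $i$ is a product of central elements, which multiply to $\prod_{i=1}^n(1-x_i^{m_i})\cdot 1$ as required.

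There is no significant obstacle here; the only subtlety worth flagging is the noncommutativity of $RG$. One cannot freely rearrange the $2n$ factors on the left because the $g_i$ need not commute pairwise. The argument sidesteps this by keeping each matched pair $(1+x_ig_i+\cdots+x_i^{m_i-1}g_i^{m_i-1})(1-x_ig_i)$ together and simplifying it to a scalar before invoking centrality — at which point the order of the remaining multiplications becomes irrelevant.
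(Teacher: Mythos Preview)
Your proof is correct and follows the same approach as the paper's, which tersely says the first identity ``immediately follows by expanding the right hand side'' and the second ``follows from the assumption $g_i^{m_i}=1$.'' Your added care about noncommutativity---collapsing each adjacent pair $(1+x_ig_i+\cdots+x_i^{m_i-1}g_i^{m_i-1})(1-x_ig_i)$ to the central scalar $1-x_i^{m_i}$ before combining across $i$---is exactly the detail the paper leaves implicit.
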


\begin{proof}
  The first statement immediately follows by expanding the right hand side. The latter statement follows from the assumption $g_i^{m_i}=1$.
\end{proof}

\begin{theorem}\label{thm:general_basis}
  Suppose $G$ has a perfect basis $(g_1,\ldots,g_n)$, and set
  \[ \alpha = \sum_{\substack{g=g_1^{c_1}\cdots g_n^{c_n}\\0\leq c_i<m_i}} x_1^{c_1}\cdots x_n^{c_n}\cdot g. \]
  If $V\cong\Cbb^r$ is a vector space and $\phi:G\ra\GL(V)$ is a representation of $G$, then
  \[ \Delta_{\phi}(\alpha) = \prod_{i=1}^{n}\frac{(1-x_i^{m_i})^r}{\theta_{\phi(g_i)}(x_i)}. \]
\end{theorem}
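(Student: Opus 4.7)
The plan is to apply the representation $\phi$ to the identities in Lemma~\ref{lem:factor_basis} and exploit multiplicativity of the determinant. The key observation is that the perfect basis assumption $g_i^{m_i} = 1$ makes each factor $A_i := 1 + x_i g_i + \cdots + x_i^{m_i-1} g_i^{m_i-1}$ ``invertible up to a scalar'' via the companion factor $(1 - x_i g_i)$.

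First I would write $\alpha = \prod_{i=1}^n A_i$ using the first part of Lemma~\ref{lem:factor_basis}. Extending $\phi$ to a ring homomorphism on $RG$ and using multiplicativity of the determinant yields
\[ \Delta_{\phi}(\alpha) = \prod_{i=1}^n \det \phi(A_i). \]

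Next, for each $i$ separately, I would invoke the single-index case of the second identity in Lemma~\ref{lem:factor_basis},
\[ A_i (1 - x_i g_i) = 1 - x_i^{m_i} g_i^{m_i} = (1 - x_i^{m_i}) \cdot 1, \]
which holds in $RG$ precisely because $g_i^{m_i} = 1$. Applying $\phi$ and then $\det$, and noting that $\det\bigl((1-x_i^{m_i})I\bigr) = (1-x_i^{m_i})^r$ since $\dim V = r$, I obtain
\[ \det \phi(A_i) \cdot \theta_{\phi(g_i)}(x_i) = (1 - x_i^{m_i})^r. \]
Since $\theta_{\phi(g_i)}(x_i)$ has constant term $\det(I) = 1$, it is a nonzero polynomial in $x_i$, so I may divide to solve for $\det \phi(A_i)$. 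Multiplying the resulting expressions over $i$ produces the desired formula.

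This proof is essentially bookkeeping once Lemma~\ref{lem:factor_basis} is in hand, so I do not anticipate any real obstacle. The only minor subtlety is interpreting the quotient: the equality lives a priori in the fraction field of $R$, but the displayed identity shows $\theta_{\phi(g_i)}(x_i)$ divides $(1 - x_i^{m_i})^r$ as polynomials, so the quotient in fact lies in $\Cbb[x_1, \ldots, x_n]$ and no extension of scalars is needed.
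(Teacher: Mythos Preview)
Your proof is correct and follows essentially the same route as the paper: both arguments apply $\phi$ and $\det$ to the identities of Lemma~\ref{lem:factor_basis} and then divide by $\theta_{\phi(g_i)}(x_i)$. The only cosmetic difference is that the paper works with the product $\alpha\prod_i(1-x_ig_i)=\prod_i(1-x_i^{m_i})\cdot 1$ all at once rather than first splitting $\alpha=\prod_i A_i$ and treating each factor separately, and it does not pause to justify the division as you do.
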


\begin{proof}
  By Lemma~\ref{lem:factor_basis}, we have
  \[ \alpha\prod_{i=1}^n(1-x_ig_i) = \prod_{i=1}^n(1-x_i^{m_i})\cdot 1. \]
  Hence,
  \begin{align*}
    \det(\phi(\alpha)) &= \frac{\det\phi\left(\prod_{i=1}^n(1-x_i^{m_i})1 \right)}{\det\phi\left(\prod_{i=1}^n 1-x_ig_i \right)}\\
    &= \prod_{i=1}^n\frac{\det ((1-x_i^{m_i}) I_r)}{\det(I_r - x_i\phi(g_i))}\\
    &= \prod_{i=1}^n\frac{(1-x_i^{m_i})^r}{\theta_{\phi(g_i)}(x_i)}.
  \end{align*}

\end{proof}

\begin{corollary}\label{cor:reg_basis}
  Suppose $G$ has a perfect basis $(g_1,\ldots,g_n)$, and set
  \[ \alpha = \sum_{\substack{g=g_1^{c_1}\cdots g_n^{c_n}\\0\leq c_i<m_i}} q^{c_1+\cdots+c_n}\cdot g. \]
  If $\phi_{\reg}$ is the regular representation of $G$, then
  \[ \Delta_{\phi_{\reg}}(\alpha)=\prod_{i=1}^n(1-q^{m_i})^{|G|(1-1/o(g_i))}. \]
\end{corollary}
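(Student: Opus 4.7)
The plan is to derive the corollary directly from Theorem~\ref{thm:general_basis} together with Corollary~\ref{cor:det_reg}, after specializing the variables. First I would observe that the element $\alpha$ in this corollary is obtained from the element $\alpha$ in Theorem~\ref{thm:general_basis} by the substitution $x_1 = x_2 = \cdots = x_n = q$. Since the representation is $\phi_{\reg}$ acting on $\Cbb G \cong \Cbb^{|G|}$, Theorem~\ref{thm:general_basis} applies with $r = |G|$, giving
\[
\Delta_{\phi_{\reg}}(\alpha) = \prod_{i=1}^n \frac{(1-q^{m_i})^{|G|}}{\theta_{\phi_{\reg}(g_i)}(q)}.
\]

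Next I would apply Corollary~\ref{cor:det_reg} to each factor in the denominator, which gives $\theta_{\phi_{\reg}(g_i)}(q) = (1-q^{o(g_i)})^{|G|/o(g_i)}$. Here is where the hypothesis that the basis is \emph{perfect} enters essentially: by definition $m_i = o(g_i)$, so the denominator simplifies to $(1-q^{m_i})^{|G|/o(g_i)}$, matching the base of the corresponding numerator.

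The last step is an elementary exponent computation: dividing the numerator and denominator gives the exponent $|G| - |G|/o(g_i) = |G|(1 - 1/o(g_i))$, yielding
\[
\Delta_{\phi_{\reg}}(\alpha) = \prod_{i=1}^n (1-q^{m_i})^{|G|(1-1/o(g_i))},
\]
as desired. There is no genuine obstacle here; the content of the corollary lies entirely in the two results it cites, and the corollary is essentially a bookkeeping specialization. The only mild point to check is that the substitution $x_i \mapsto q$ is valid, which is immediate since Theorem~\ref{thm:general_basis} is stated as an identity in $R = \Qbb[x_1,\ldots,x_n]$ (or rather in its fraction field, once one clears the $\theta_{\phi(g_i)}(x_i)$ factors) and we are evaluating at a point where no denominator vanishes identically.
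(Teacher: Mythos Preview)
Your proposal is correct and follows essentially the same approach as the paper: specialize $x_i = q$ in Theorem~\ref{thm:general_basis} with $r = |G|$, then invoke Corollary~\ref{cor:det_reg} to evaluate $\theta_{\phi_{\reg}(g_i)}(q)$ and cancel using $m_i = o(g_i)$. Your write-up is a bit more explicit about where the perfect-basis hypothesis enters, but the argument is the same.
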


\begin{proof}
  Specializing $q=x_i$ in Theorem~\ref{thm:general_basis} gives
  \[ \Delta_{\phi_{\reg}}(\alpha)=\prod_{i=1}^n\frac{(1-q^{m_i})^{|G|}}{\theta_{\phi_{\reg}(g_i)}(q)}. \]
  But $\theta_{\phi_{\reg}(g_i)}(q)=(1-q^{m_i})^{|G|/o(g_i)}$ by Corollary~\ref{cor:det_reg}.
\end{proof}

\begin{example}\label{ex:Dn_reps}
  Let $G$ be the dihedral group in Example~\ref{ex:Dn_basis}. For $h\in G$, set $\rot(h)=h(n)$ if $h(n)\neq n$, and $\rot(h)=0$ if $h(n)=n$. Let $\refl(h)=0$ if $h$ is a rotation and $\refl(h)=1$ if $h$ is a reflection. Then for $0\leq c_1<n,\ 0\leq c_2<2$, we have $\rot(g_1^{c_1}g_2^{c_2})=c_1$ and $\refl(g_1^{c_1}g_2^{c_2})=c_2$.

  Let $\alpha=\sum_{h\in G}x_1^{\rot(h)}x_2^{\refl(h)}\cdot h$, and let $\phi$ be a representation of $G$ of dimension $r$. By Theorem~\ref{thm:general_basis}, we have
  \[ \Delta_{\phi}(\alpha) = \frac{(1-x_1^n)^r}{\theta_{\phi(g_1)}(x_1)}\frac{(1-x_2^2)^r}{\theta_{\phi(g_2)}(x_2)}. \]
  If $\phi=\phi_{\reg}$ is the regular representation, then $r=2n,\ \theta_{\phi(g_1)}(q)=(1-q^n)^2$, and $\theta_{\phi(g_2)}(q)=(1-q^2)^n$. Hence,
  \[ \Delta_{\phi_{\reg}}(\alpha)=(1-x_1^n)^{2n-2}(1-x_2^2)^n. \]
  
  We end this example by evaluating $\Delta_{\phi}(\alpha)$ for all irreducible representations over $\Qbb$. Let $C_n = \langle g_1 \rangle$.
\begin{itemize}
\item
For the trivial representation $\phi_{\triv}$ we have
\[
\Delta_{\phi_{\triv}}(\alpha)
=
\frac{(1-x_1^n) (1-x_2^2)}{(1-x_1)(1-x_2)}.
\]
\item
There is a one-dimensional representation $\phi_{\sign}(g) = (-1)^{c_2(g)}$ coming from the action on the cosets of $C_n$.
We have
\[
\Delta_{\phi_{\sign}}(\alpha)
=
\frac{(1-x_1^n)(1-x_2^2)}{(1-x_1)(1+x_2)}.
\]
\item
For each divisor $d$ of $n$, there is an irreducible representation $\rho_d$ of $C_n$. 
Writing
\[
\Phi_d(q) = a_0 + a_1 q + \cdots + q^{\ell}
\]
for the $d$th cyclotomic polynomial, we can realize $\rho_d$ on $V = \Qbb^{\ell}$ via
\[
\rho_d(g_1)
=
\begin{bmatrix}
0 & 0 & 0 & \cdots & 0 & 0 & -a_0 \\
1 & 0 & 0 & \cdots & 0 & 0 & -a_1 \\
0 & 1 & 0 & \cdots & 0 & 0 & -a_2 \\
0 & 0 & 1 & \cdots & 0 & 0 & -a_3 \\
\vdots & \vdots & \vdots & \ddots & \vdots & \vdots & \vdots \\
0 & 0 & 0 & \cdots & 1 & 0 & -a_{\ell-2} \\
0 & 0 & 0 & \cdots & 0 & 1 & -a_{\ell-1} \\
\end{bmatrix}
\]
Let $A$ be the anti-diagonal matrix with all anti-diagonal entries equal to $1$.
Then $A^2$ is the identity and  $A \rho_d(g_1) = \rho_d(g_1^{-1}) A$. Hence, we have an irreducible representation $\phi_d$ of the dihedral group via $\phi_d(h) = \rho_d(g_1)^{\rot(h)} A^{\refl(h)}$.
In this representation, one has $\theta_{\phi_d(g_1)}(q)=q^{\ell}\Phi_d(1/q)$ and
\[
\theta_{\phi_d(g_2)}(q) = 
\begin{cases}
(1-q^2)^{\frac{\ell}{2}} & \ell \textup{ even} \\
(1-q^2)^{\frac{\ell-1}{2}} (1-q) & \ell \textup{ odd}
\end{cases}
\]
from which $\Delta_{\phi_d}(\alpha)$ can be written down.
\item
Lastly, for any divisor $d$ of $n$ we can tensor the representation $\phi_d$ with the representation $\psi$ to obtain another irreducible representation.
\end{itemize}

\end{example}

\section{Major index matrix}\label{sec:maj}

For $k\in[n]$, let $t_k=(k,k-1,\ldots,1)$ be a $k$-cycle in $\Sfrak_n$. In \cite{adin2001flag}, Adin and Roichman gave an alternative interpretation of the major index of a permutation. We recall this statement and its proof.

\begin{lemma}[Claim 2.1, \cite{adin2001flag}]\label{lem:maj_basis}
  The $(n-1)$--tuple $(t_n,t_{n-1},\ldots,t_2)$ is a perfect basis of $\Sfrak_n$. Moreover, if $w=t_n^{c_n}t_{n-1}^{c_{n-1}}\cdots t_2^{c_2}$ for some $c_i$ with $0\le c_i< i$, then $\maj(w)=c_n+c_{n-1}+\cdots+c_2$.
\end{lemma}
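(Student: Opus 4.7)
The plan is to prove both assertions simultaneously by induction on $n$; the base case $n=2$ is immediate, as $\Sfrak_2 = \{e, t_2\}$ with $\maj(e) = 0$ and $\maj(t_2) = 1$.

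For the perfect basis claim, the key observation is that each $t_k$ with $k<n$ fixes $n$, so the tail $u := t_{n-1}^{c_{n-1}} \cdots t_2^{c_2}$ lies in the stabilizer of $n$, which we identify with $\Sfrak_{n-1}$. Meanwhile $t_n^c$ sends $n$ to $n-c$ for each $0 \le c < n$. Thus $w(n) = t_n^{c_n}(u(n)) = t_n^{c_n}(n) = n - c_n$ forces $c_n = n - w(n)$, after which $t_n^{-c_n} w \in \Sfrak_{n-1}$ and the inductive hypothesis furnishes $c_{n-1}, \ldots, c_2$ uniquely.

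For the major index claim, by induction it suffices to prove that $\maj(t_n^c u) = c + \maj(u)$ whenever $u \in \Sfrak_{n-1}$ and $0 \le c < n$. The permutation $t_n^c$ acts as a cyclic shift on $[n]$: it maps $\{c+1, \dots, n\}$ order-preservingly to $\{1, \dots, n-c\}$ via $j \mapsto j - c$, and $\{1, \dots, c\}$ order-preservingly to $\{n-c+1, \dots, n\}$ via $j \mapsto j + n - c$. Write $u = u_1 u_2 \cdots u_{n-1}\, n$ in one-line notation and set $f(i) = 1$ if $u_i \le c$ and $0$ otherwise, so $f(1) + \cdots + f(n-1) = c$ since $u$ permutes $[n-1]$. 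A short case analysis on $(u_i, u_{i+1})$ will show that for $i \in [n-2]$ the descent status at position $i$ of $w := t_n^c u$ agrees with that of $u$ when $f(i) = f(i+1)$, becomes a descent (from an ascent in $u$) when $f(i) = 1$ and $f(i+1) = 0$, and becomes an ascent (from a descent in $u$) when $f(i) = 0$ and $f(i+1) = 1$. Position $n-1$ is a descent of $w$ exactly when $u_{n-1} \le c$, i.e.\ when $f(n-1) = 1$.

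Combining contributions, $\maj(w) - \maj(u)$ equals $\sum_{i=1}^{n-2} i\,(f(i) - f(i+1)) + (n-1)\,f(n-1)$. The main subtle step is Abel summation: the first sum telescopes to $\sum_{i=1}^{n-1} f(i) - (n-1)\,f(n-1) = c - (n-1)\,f(n-1)$, after which the boundary contribution at position $n-1$ cancels and leaves $\maj(w) - \maj(u) = c$. This closes the induction. The hardest part is simply the careful bookkeeping of descent flips under the cyclic shift, and the Abel summation is what makes the position-$(n-1)$ boundary term cancel cleanly to recover exactly $c$.
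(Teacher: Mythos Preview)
Your proof is correct, and it takes a genuinely different route from the paper's argument for the major-index part.

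For the perfect basis claim, you do essentially what the paper does: determine $c_n$ from $w(n)$ and induct on the permutation $t_n^{-c_n}w \in \Sfrak_{n-1}$.

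For $\maj(w)=c_n+\cdots+c_2$, however, the paper does \emph{not} prove your general identity $\maj(t_n^{c}u)=c+\maj(u)$ for arbitrary $u\in\Sfrak_{n-1}$. Instead it builds the partial products $g_k=t_n^{c_n}\cdots t_k^{c_k}$ from left to right and maintains the structural invariant that the first $k$ entries of $g_k$ are cyclically ordered; a short case analysis on whether the current prefix of $g_{k+1}$ is already increasing then gives $\maj(g_k)=\maj(g_{k+1})+c_k$. Your argument is more general: the observation that $d_i^{\,w}-d_i^{\,u}=f(i)-f(i+1)$ for $i\le n-2$, together with the Abel summation you sketch, yields $\maj(t_n^{c}u)-\maj(u)=c$ for every $u\in\Sfrak_{n-1}$, with the boundary term at position $n-1$ cancelling exactly as you describe. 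What you gain is a clean induction that never needs the cyclically-ordered invariant; what the paper's approach gains is that, once that invariant is isolated, there is essentially no computation and no summation trick is required.
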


\begin{proof}
  Let $w\in\Sfrak_n$ be given. Since $t_n$ is the $n$--cycle $(n,n-1,\ldots,1)$, there exists a unique $c_n$ with $0\le c_n < n$ such that $t_n^{-c_n}w$ fixes $n$. By similar reasoning, there is a unique $c_{n-1}$ with $0\le c_{n-1}< n-1$ such that $t_{n-1}^{-c_{n-1}}t_n^{-c_n}w$ fixes $n-1$. Since $t_{n-1}$ fixes $n$, the element $t_{n-1}^{-c_{n-1}}t_n^{-c_n}w$ also fixes $n$. Continuing in this manner, we find unique $c_i$ with $0\le c_i< i$ such that $t_2^{-c_2}\cdots t_n^{-c_n}w$ is the identity permutation. Hence, the factorization $w=t_n^{c_n}\cdots t_2^{c_2}$ is unique.

  As there are $n!=|\Sfrak_n|$ choices for the exponents $(c_n,\ldots,c_2)$, we conclude that $(t_n,\ldots,t_2)$ is a basis. In fact, it is a perfect basis since the order of $t_i$ is $i$ and the exponent $c_i$ can be any value in the range $0\leq c_i<i$.

  For each $k$, let $g_k=t_n^{c_n}\cdots t_k^{c_k}$. We prove that the major index of $g_k$ is $c_n+\cdots+c_k$. Taking $k=2$ gives $\maj(w)=c_n+\cdots+c_2$.
  
  We observe that the values in $g_n$ are \emph{cyclically ordered}, i.e. there exists a unique $j\in\Zbb/n\Zbb$ such that $g_n(|j|+1)<\cdots<g_n(n)<g_n(1)<\cdots<g_n(|j|)$. Moreover, $|j|=c_n$, so the major index of $g_n$ is $c_n$.

  Now let $k\geq 2$ and suppose the first $k+1$ values of $g_{k+1}$ are cyclically ordered. Multiplying $g_{k+1}$ on the right by $t_k$ rotates the first $k$ values of $g_{k+1}$. Hence, the first $k$ values of $g_k$ are cyclically ordered.

  If the first $k$ values of $g_{k+1}$ are in increasing order, then by the same argument as in the base case, $\maj(g_k)=\maj(g_{k+1}t_k^{c_k})=\maj(g_{k+1})+c_k$.

  Otherwise, there exists $j\in\Zbb/k\Zbb,\ j\neq 0$ such that $g_{k+1}(|j|+1)<\cdots<g_{k+1}(k+1)<g_{k+1}(1)<\cdots<g_{k+1}(|j|)$. Then $g_k$ has a descent at $|j|+c_k$ if $|j|+c_k\leq k$, or $g_k$ has descents at $|j|+c_k-k$ and at $k$ if $k+1\le |j|+c_k\le k+|j|-1$. All higher descents of $g_k$ are shared with $g_{k+1}$. Hence, $\maj(g_k)=\maj(g_{k+1})+c_k$, as desired.
\end{proof}

For the remainder of the section, we consider the element $\alpha\in \Cbb(q)\Sfrak_n$ where
\[ \alpha = \sum_{w\in\Sfrak_n} q^{\maj(w)}\cdot w. \]

Lemma~\ref{lem:maj_basis} together with Theorem~\ref{thm:general_basis} immediately implies the following.

\begin{corollary}\label{cor:maj_general}
  Let $n\geq 1$ be given. If $V\cong\Cbb^r$ and $\phi:\Sfrak_n\ra\GL(V)$, then
  \[ \Delta_{\phi}(\alpha) = \prod_{k=2}^n\frac{(1-q^k)^r}{\theta_{\phi(t_k)}(q)}. \]
\end{corollary}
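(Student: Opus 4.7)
The plan is to observe that this corollary is the natural specialization of the general factorization in Theorem~\ref{thm:general_basis} obtained by equating all of the variables $x_k$ to a single parameter $q$, with the perfect basis coming from Lemma~\ref{lem:maj_basis}. Concretely, I would first invoke Lemma~\ref{lem:maj_basis} to supply the perfect basis $(g_1,\ldots,g_{n-1}) = (t_n, t_{n-1}, \ldots, t_2)$ of $\Sfrak_n$, with orders $m_k = o(t_k) = k$. By that lemma, each $w \in \Sfrak_n$ has a unique expression $w = t_n^{c_n} t_{n-1}^{c_{n-1}} \cdots t_2^{c_2}$ with $0 \le c_k < k$, and $\maj(w) = c_n + c_{n-1} + \cdots + c_2$.

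Next I would specialize Theorem~\ref{thm:general_basis} to this basis by setting every formal variable equal to $q$. Under that substitution, the element
\[
\sum_{w = t_n^{c_n}\cdots t_2^{c_2}} x_n^{c_n}\cdots x_2^{c_2}\cdot w
\]
becomes
\[
\sum_{w \in \Sfrak_n} q^{c_n + c_{n-1} + \cdots + c_2}\cdot w \;=\; \sum_{w\in\Sfrak_n} q^{\maj(w)}\cdot w \;=\; \alpha,
\]
using the major-index identity from Lemma~\ref{lem:maj_basis}. The right-hand side of Theorem~\ref{thm:general_basis} then specializes to $\prod_{k=2}^{n} (1-q^{k})^{r}/\theta_{\phi(t_k)}(q)$, which is exactly the claimed formula. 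There is no serious obstacle to overcome: the bulk of the work has already been done in assembling the perfect basis (Lemma~\ref{lem:maj_basis}) and in the group-ring factorization argument (Theorem~\ref{thm:general_basis}), so the only task is to verify that the variable specialization converts the generating element into $\alpha$.
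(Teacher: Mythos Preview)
Your proposal is correct and matches the paper's approach exactly: the paper simply states that Lemma~\ref{lem:maj_basis} together with Theorem~\ref{thm:general_basis} immediately implies the corollary, which is precisely the specialization $x_k \mapsto q$ you describe.
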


If $\phi_{\reg}$ is the regular representation of $\Sfrak_n$, then
\[ \phi_{\reg}\left(\sum q^{\maj(w)}\cdot w\right) = \left(q^{\maj(uv^{-1})}\right)_{u,v\in\Sfrak_n}. \]
Theorem~\ref{thm:main_maj} now follows immediately from Corollary~\ref{cor:reg_basis}.

\begin{example}
\label{eg:defining}
The symmetric group $\Sfrak_n$ naturally acts on $[n]$. The corresponding permutation representation $\phi_{\DEF}$ is called the \emph{defining representation}. Explicitly, the matrix representing $\alpha$ is
\[
\phi_{\DEF}(\alpha) = \left(\sum_{w(i)=j}q^{\maj(w)}\right)_{i,j\in[n]}.
\]
We verify that
\[
\det\left(\sum_{\substack{w\in\Sfrak_n\\w(i)=j}}q^{\maj(w)}\right)_{i,j\in[n]} = (1-q)^{\binom{n}{2}}([n]!_q)^{n-1}.
\]
The left-hand side is $\Delta_{\phi_{\DEF}}(\alpha)$, so
\[
\det\left(\sum_{\substack{w\in\Sfrak_n\\w(i)=j}}q^{\maj(w)}\right)_{i,j\in[n]} = \prod_{k=2}^{n}\frac{(1-q^k)^n}{\theta_{\phi_{\DEF}(t_k)}(q)}.
\]
The element $t_k$ has one orbit of size $k$ and $n-k$ orbits of size $1$. Hence,
\[
\theta_{\phi_{\DEF}(t_k)}(q) = (1-q^k)(1-q)^{n-k}.
\]
Hence, the determinant is equal to 
\begin{align*}
    \prod_{k=2}^n\frac{(1-q^k)^n}{\theta_{\phi_{\DEF}(t_k)}(q)}
    &= \prod_{k=2}^{n}\frac{(1-q^k)^n}{(1-q^k)(1-q)^{n-k}}\\
    &= \prod_{k=2}^{n}([k]_q)^{n-1}(1-q)^{k-1} = (1-q)^{\binom{n}{2}}([n]!_q)^{n-1}.
\end{align*}
\end{example}

\begin{example}
One can consider the action of $\Sfrak_n$ on tuples.
For example, here we calculate $\Delta_{\phi}(\alpha)$ where $\phi$ is the representation determined by the action of $\Sfrak_n$ on $\binom{n}{2}$.
From Corollary~\ref{cor:det_reg} and Theorem~\ref{thm:general_basis} it suffices to determine the orbit decomposition of the cycles $t_k$ acting on $\binom{n}{2}$.

For the $t_k$ orbit of $(i,j) \in \binom{n}{2}$ with $i < j$ there are three possibilities:
\begin{itemize}
\item
$k < i$ in which case $t_k$ fixes $(i,j)$;
\item
$i \le k < j$ in which case the $j$ is fixed by $t_k$ and the orbit has size $k$;
\item
$j \le k$ in which case the orbit is the same as the orbit of $(i,j) \in \binom{k}{2}$.
\end{itemize}
It therefore suffices to determine the orbit structure of the action of $t_k$ on $\binom{k}{2}$.
When $k$ is odd all orbits have size $k$.
When $k$ is even the possibility $2(j-i) = k$ gives the unique orbit of size $\frac{k}{2}$.
We therefore have, for the three possibilities above:
\begin{itemize}
\item
$\binom{n-k}{2}$ orbits of size 1;
\item
$n-k$ orbits of size $k$;
\item
$\frac{k-1}{2}$ orbits of size $k$ when $k$ is odd \textsc{or} $\frac{k-2}{2}$ orbits of size $k$ and one orbit of size $\frac{k}{2}$ when $k$ is even;
\end{itemize}
and can calculate that
\[
\theta_{\phi(t_k)}(q)
=
\begin{cases}
(1-q)^{\binom{n-k}{2}} (1-q^k)^{n-k} (1-q^k)^{\frac{k-1}{2}} & k \textup{ odd} \\
(1-q)^{\binom{n-k}{2}} (1-q^k)^{n-k} (1-q^k)^{\frac{k-2}{2}} (1-q^{\frac{k}{2}})  & k \textup{ even}
\end{cases}
\]
with the determinant formula following from Theorem~\ref{thm:general_basis}.
\end{example}

We are interested in the extent to which $\Delta_{\phi_\lambda}(\alpha) = \det \phi_\lambda(\alpha)$ can be calculated where $\lambda$ is any partition of $n \in \N$ and $\phi_\lambda$ is the corresponding irreducible representation of $\Sfrak_n$.
The factorization 
\[
\prod_{i=2}^n (1+q t_i + \cdots + q^{i-1}t_i^{i-1}) (1-q t_i) = \prod_{i=2}^n(1 - q^i)\cdot 1
\]
from $x_i = q$ and $g_i = t_i$ in \eqref{eqn:alphaBetaFactor}, together with the fact that $t_{n-1},\dots,t_2$ is a perfect basis of $\Sfrak_{n-1}$, suggests an inductive approach.
Indeed, if $\lambda$ is a partition of $n$, the restriction $\phi_\lambda | \Sfrak_{n-1}$ of $\phi_\lambda$ to $\Sfrak_{n-1}$ is known to be a direct sum
\[
\phi_\lambda | \Sfrak_{n-1} = \bigoplus_{\eta \prec \lambda} \phi_\eta
\]
of those $\phi_\eta$ where $\eta$ immediately precedes $\lambda$ in the Young lattice.
Thus, for $2 \le i \le n-1$ we have
\begin{equation}
\label{eqn:irredInduction}
\det \phi_\lambda(1-qt_i) = \prod_{\eta \prec \lambda} \det \phi_\eta(1-qt_i)
\end{equation}
and it remains to calculate $\theta_{\phi_\lambda(t_n)}(q) = \det \phi_\lambda(1 - qt_n)$. For this calculation it suffices to determine the eigenvalues of $\phi_\lambda(t_n)$. These can be found using work of Stembridge~\cite[Theorem~3.3]{stembridge1989eigenvalues} which we recall here.

Fix a partition $\lambda$ of $n$ and $g \in \Sfrak_n$ of order $m$.
The eigenvalues of $\phi_\lambda(g)$ are of the form $\omega^{e_1},\dots,\omega^{e_r}$ where $\omega = e^{2 \pi i / m}$.
The exponents $e_1,\dots,e_r$ are called the cyclic exponents of $g$ and are defined modulo $m$.

A standard tableau over $\lambda$ is any filling of $\lambda$ by $\{1,\dots,n\}$ with rows and columns strictly increasing.
One calls $1 \le k \le n$ a descent of a standard tableau if $k+1$ appears in a row strictly below that of $k$.

Let $\mu = (\mu_1,\mu_2,\dots,\mu_\ell)$ be the cycle type of our element $g \in \Sfrak_n$.
Form
\[
b_\mu = \left( \frac{m}{\mu_1}, \frac{2m}{\mu_1}, \dots, m, \frac{m}{\mu_2},\frac{2m}{\mu_2},\dots, m,\dots \right)
\]
which is a tuple of length $\mu_1 + \cdots + \mu_\ell$.
For example
\[
b_{(4,4,3,2)} = (3,6,9,12,3,6,9,12,4,8,12,6,12)
\]
and if $g$ is an $n$-cycle we have $b_{(n)} = (1,2,\dots,n)$.
For any standard tableau $T$ over $\lambda$ its $\mu$ index is
\[
\ind_\mu(T) = \sum_{k \in D(T)} b_\mu(k) \bmod m
\]
where $D(T)$ is the set of descents of $T$.
The content of \cite[Theorem~3.3]{stembridge1989eigenvalues} is that
\[
q^{e_1} + \cdots + q^{e_r} = \sum_{T | \lambda} q^{\ind_\mu(T)}
\]
modulo $1-q^m$.

\begin{example}[The standard representation]
The standard representation corresponds to the partition $\lambda = [n-1,1]$. We will calculate the eigenvalues of $\phi_\lambda(t_n)$. The standard tableaux over $\lambda$ are indexed by the entry $2,\dots,n$ on the second row. Each has a single descent of $1,\dots,n-1$ respectively. We conclude that $\phi_\lambda(t_n)$ has eigenvalues $\omega,\omega^2,\dots,\omega^{n-1}$ and that its characteristic polynomial is $[n]_q$.
Then
\[
\theta_{\phi_\lambda(t_n)}(q) = q^{n-1} \det \phi_\lambda(q - t_n) = [n]_q
\]
as well.
For all $2 \le i \le n-1$ we have
\begin{equation}
\label{eqn:standardInduction}
\theta_{\phi_\lambda(t_i)}(q) = (1-q)^{n-i} \theta_{\phi_{[i-1,1]}(t_i)}(q)
=
(1-q)^{n-i} [i]_q
\end{equation}
from repeated application of \eqref{eqn:irredInduction}.
We conclude that
\[
\Delta_\lambda(\alpha)
=
\dfrac{\displaystyle\prod_{i=2}^n (1-q^i)^{n-1}}{\displaystyle\prod_{i=2}^n (1-q)^{n-i} [i]_q}
=
\frac{1}{[n]_q!} \dfrac{\displaystyle \prod_{i=2}^n (1-q^i)^{n-1}}{\displaystyle \prod_{i=2}^n(1-q)^{n-1}} \prod_{i=2}^n (1-q)^{i-1}
=
([n]_q!)^{n-2} (1-q)^{\binom{n}{2}}
\]
which can also be obtained from dividing the result of Example~\ref{eg:defining} by $[n]_q!$.
\end{example}

\begin{example}[The {$[2,2]$} representation]
Fix $\lambda = [2,2]$. The two standard tableaux over $\lambda$ are
\[
T = \young(12,34)
\qquad
S = \young(13,24)
\]
with descent sets $\{2\}$ and $\{1,3\}$ respectively.
The element $t_4$ has eigenvalues $-1$ and $1$ so its characteristic polynomial is $q^2 - 1$. Thus
\[
\theta_{\phi_\lambda(t_4)}(q)
=
q^2 \det \phi_\lambda(\tfrac{1}{q} - t_4)
=
1-q^2
\]
and
\begin{align*}
\theta_{\phi_\lambda(t_3)}(q)
&
=
\theta_{\phi_{[2,1]}(t_3)}(q)
=
[3]_q
\\
\theta_{\phi_\lambda(t_2)}(q)
&
=
\theta_{\phi_{[2,1]}(t_2)}(q)
=
(1-q) [2]_q
\end{align*}
from the previous example.
Finally
\[
\Delta_\lambda(\alpha)
=
\dfrac{(1-q^2)^2 (1-q^3)^2 (1-q^4)^2}{(1-q^2) \cdot (1+q+q^2) \cdot (1-q) (1 + q)}
=
(1-q)(1-q^3)(1-q^4)^2
\]
\end{example}

\section{Flag major index matrix}\label{sec:fmaj}

Let $H,N$ be groups such that $H$ acts on $N$ on the right. The \emph{semidirect product} $H\ltimes N$ is the group whose elements are $(g,x)$ for $g\in H,\ x\in N$, where
\[ (g,x)(h,y) = (gh,(x\cdot h)y). \]

The symmetric group $\Sfrak_n$ acts on $(\Zbb/m\Zbb)^n$ by permuting coordinates. That is, if $w\in\Sfrak_n$ and $x\in(\Zbb/m\Zbb)^n$, then $x\cdot w\in(\Zbb/m\Zbb)^n$ where $(x\cdot w)_i = x_{w(i)}$. The group of colored permutations is the semidirect product $\Sfrak_n^m=\Sfrak_n\ltimes (\Zbb/m\Zbb)^n$. 
We express a colored permutation $(w,x)$ by writing $w$ in one--line notation with $x_k$ bars above $w(k)$. For example, the colored permutation $(1342,\ (1,0,2,1))$ is written $\bar{1}3\bar{\bar{4}}\bar{2}$.

Let $b=(1,0,0,\ldots,0)\in (\Zbb/m\Zbb)^n$. As in Section~\ref{sec:maj}, we set $t_k=(k,k-1,\ldots,1)\in\Sfrak_n$. In particular, we let $t_1$ be the identity permutation.

For $k\in[n]$, let $\tilde{t}_k=(t_k,b)\in\Sfrak_n^m$. Then the order of $\tilde{t}_k$ is $mk$. For example, if $m=n=k=3$, then
\[ \langle \tilde{t}_k \rangle = \{ 123,\ \bar{3}12,\ \bar{2}\bar{3}1,\ \bar{1}\bar{2}\bar{3},\ \bar{\bar{3}}\bar{1}\bar{2},\ \bar{\bar{2}}\bar{\bar{3}}\bar{1},\ \bar{\bar{1}}\bar{\bar{2}}\bar{\bar{3}},\ 3\bar{\bar{1}}\bar{\bar{2}},\ 23\bar{\bar{1}} \}. \]

Colored letters are totally ordered as
\[
n>(n-1)>\cdots>1>\bar{n}>\cdots>\bar{1}>\bar{\bar{n}}>\cdots
\]
giving rise to a major index for colored permutations.
For example, the colored permutation $\bar{1}3\bar{\bar{4}}\bar{2}$ only has a descent at $2$ since $3>\bar{\bar{4}}$ but $\bar{1}<3$ and $\bar{\bar{4}}<\bar{2}$. So, the major index is $\maj(\bar{1}3\bar{\bar{4}}\bar{2})=2$.

The \emph{flag major index} of a colored permutation $g\in\Sfrak_n^m$ is $\fmaj(g)=m\maj(g)+\col(g)$. For example, if $m=3$, then $\fmaj(\bar{1}3\bar{\bar{4}}\bar{2})=3\cdot 2 + 4 = 10$. This statistic was introduced by Adin and Roichman in \cite{adin2001flag} to give a combinatorial formula for the Hilbert series of a certain ring of invariants.

The proof of the following lemma is similar to the symmetric group case, and will be omitted. It can be obtained from \cite[Proposition 2.1]{Shwartz2007MajorIA} and \cite[Theorem 3.1]{adin2001flag}.

\begin{lemma}\label{lem:fmaj_basis}
  The $n$--tuple $(\tilde{t}_n,\ldots,\tilde{t}_1)$ is a perfect basis of $\Sfrak_n^m$. Moreover, if $g=\tilde{t}_n^{c_n}\cdots\tilde{t}_1^{c_1}$ for some $0\le c_k < mk$, then $\fmaj(g)=c_n+\cdots+c_1$.
\end{lemma}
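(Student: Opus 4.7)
I would adapt the proof of Lemma~\ref{lem:maj_basis} to the colored setting. For perfectness, a direct computation gives
\[
\tilde{t}_k^c = \left( t_k^c,\, \sum_{j=0}^{c-1} b \cdot t_k^j \right),
\]
which is the identity only when $c$ is a multiple of $mk$. Thus $\tilde{t}_k$ has order $mk$ and $\prod_{k=1}^n o(\tilde{t}_k) = n!\, m^n = |\Sfrak_n^m|$, so the basis claim reduces to uniqueness of the factorization. To establish uniqueness, I would peel off factors of $\tilde{t}_n$ first: the cyclic subgroup $\langle \tilde{t}_n \rangle$ acts freely and transitively on the $mn$ possible (value, color) pairs occurring at position $n$, as illustrated by the enumeration of $\langle \tilde{t}_3 \rangle$ displayed in the excerpt. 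Given any $g = (w, x) \in \Sfrak_n^m$, there is therefore a unique $c_n \in \{0, 1, \ldots, mn-1\}$ with $\tilde{t}_n^{-c_n} g$ fixing position $n$ with color $0$, reducing the problem to $\Sfrak_{n-1}^m$ by induction on $n$.

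For the flag major index identity, set $g_k = \tilde{t}_n^{c_n} \cdots \tilde{t}_k^{c_k}$ and prove by descending induction on $k$ that $\fmaj(g_k) = c_n + \cdots + c_k$. The base case $g_n = \tilde{t}_n^{c_n}$ follows by direct inspection: writing $c_n = qn + r$ with $0 \le r < n$ and $0 \le q < m$, the one-line notation of $\tilde{t}_n^{c_n}$ has values $n - r + 1, \ldots, n$ with color $(q+1) \bmod m$ in positions $1, \ldots, r$ and values $1, \ldots, n - r$ with color $q$ in positions $r+1, \ldots, n$. Checking descents under the colored total order $n > \cdots > 1 > \bar{n} > \cdots > \bar{1} > \bar{\bar{n}} > \cdots$ then yields $\fmaj(\tilde{t}_n^{c_n}) = c_n$ via a case split on whether $q + 1 < m$ or $q + 1 = m$: in the former there are no descents and $\col = c_n$, while in the latter a single descent appears at position $r$, contributing $mr$ to $m \cdot \maj$, with $\col = (n - r)(m - 1)$ summing to $c_n$ as well.

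For the inductive step I would establish $\fmaj(g_{k+1} \tilde{t}_k^{c_k}) = \fmaj(g_{k+1}) + c_k$ by tracking the effect of right multiplication by $\tilde{t}_k$ on the one-line notation. A single multiplication by $\tilde{t}_k$ cyclically shifts the first $k$ entries (moving position $k$ to position $1$) and increments the color at the new position $1$ by one modulo $m$. By induction on the power, each such single multiplication increases $\fmaj$ by exactly $1$: either $\col$ increases by $1$ without changing $\maj$, or the color wraps from $m - 1$ to $0$, decreasing $\col$ by $m - 1$ but creating a new descent in the first $k$ positions that contributes $m$ to $m \cdot \maj$. An auxiliary cyclic-ordering invariant on the first $k$ entries of the intermediate products---now viewed in the colored alphabet cyclically modulo $mn$ values---controls the descent structure across these steps.

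The main obstacle I anticipate is this descent bookkeeping. As in Lemma~\ref{lem:maj_basis}, determining the positions of new and lost descents under a single rotation requires a case split based on where the ``cut point'' of the cyclic ordering falls, and the color wrap adds another layer because the colored alphabet has its own cyclic structure modulo $m$ colors. Once this is carried out carefully, the single-step increments telescope to $c_k$, giving $\fmaj(g_k) = \fmaj(g_{k+1}) + c_k$ and completing the induction.
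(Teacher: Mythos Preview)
Your approach is exactly what the paper intends: it omits the proof, saying it ``is similar to the symmetric group case'' (Lemma~\ref{lem:maj_basis}) and citing \cite{Shwartz2007MajorIA,adin2001flag}, and your proposal carries out precisely that adaptation. The perfect-basis argument (order computation plus peeling off $\tilde{t}_n,\tilde{t}_{n-1},\ldots$ via the free transitive action on the last colored letter) and the base case $\fmaj(\tilde{t}_n^{c_n})=c_n$ are correct as written.

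For the inductive step, your single-step strategy is sound, but the invariant you gesture at can be stated more sharply: the first $k+1$ colored letters of $g_{k+1}$ are a cyclic rotation of their increasing rearrangement in the colored total order (hence so are the first $k$). This is the direct colored analogue of the invariant in Lemma~\ref{lem:maj_basis}, and it is preserved under right multiplication by $\tilde{t}_k$ because incrementing the color of the rotated-in letter moves it to the immediate predecessor of its former self in the colored order (or, upon wrap, to the unique largest letter among the first $k$), so the rotated sequence remains cyclic. With this invariant in hand, the case split you describe --- color increment without wrap (one more to $\col$, descent pattern among positions $1,\ldots,k$ shifts without changing $\maj$) versus wrap ($\col$ drops by $m-1$ while $\maj$ gains exactly one) --- goes through, and the boundary descent at position $k$ is controlled exactly as in the uncolored proof. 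Your anticipated obstacle is real but not deep once the invariant is stated this way.
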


\begin{example}
  Consider the colored permutation $g=\bar{1}3\bar{\bar{4}}\bar{2}$. The only power of $\tilde{t}_4$ with $\bar{2}$ in the last position is $(\tilde{t}_4)^6=\bar{\bar{3}}\bar{\bar{4}}\bar{1}\bar{2}$. This permutation has no descents, so its flag major index is
  \[ \fmaj(\tilde{t}_4^6)=3\maj(\tilde{t}_4^6)+\col(\tilde{t}_4^6)=0+6=6. \]
  Next, we rotate the first three entries once to put $\bar{\bar{4}}$ into the third position, i.e. $\tilde{t}_4^6\tilde{t}_3=\bar{\bar{1}}\bar{\bar{3}}\bar{\bar{4}}\bar{2}$. There are still no descents, and its flag major index is $7$.
  Rotating the first two entries twice will put $3$ into the second position, i.e. $\tilde{t}_4^6\tilde{t}_3\tilde{t}_2^2=13\bar{\bar{4}}\bar{2}$. The colors are removed from the first two values, but a descent at $2$ is created, so
  \[ \fmaj(\tilde{t}_4^6\tilde{t}_3\tilde{t}_2^2)=3\cdot 2+3=9. \]
  Finally, we change the color of the first entry to find $g=\tilde{t}_4^6\tilde{t}_3\tilde{t}_2^2\tilde{t}_1$ and $\fmaj(g)=10$ is the sum of the exponents of this factorization.
\end{example}

\begin{theorem}\label{thm:fmaj}
  Let $n,m\geq 1$.
  \[ \det\left(q^{\fmaj(gh^{-1})}\right)_{g,h\in\Sfrak_n^m} = \prod_{k=1}^n(1-q^{mk})^{n!m^n(1-1/(mk))} \]  
\end{theorem}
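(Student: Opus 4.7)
The plan is to reduce Theorem~\ref{thm:fmaj} to a direct application of Corollary~\ref{cor:reg_basis} via the perfect basis supplied by Lemma~\ref{lem:fmaj_basis}. Set
\[ \alpha = \sum_{g\in\Sfrak_n^m} q^{\fmaj(g)}\cdot g \in \Cbb(q)\Sfrak_n^m. \]
Since the regular representation of $\Sfrak_n^m$ sends an element $\sum r_g g$ of the group ring to the matrix $(r_{gh^{-1}})_{g,h}$, we have $\Delta_{\phi_{\reg}}(\alpha) = \det(q^{\fmaj(gh^{-1})})_{g,h\in\Sfrak_n^m}$, which is exactly the left-hand side of the identity to be proved.

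Next I would observe that Lemma~\ref{lem:fmaj_basis} identifies $(\tilde t_n,\ldots,\tilde t_1)$ as a perfect basis of $\Sfrak_n^m$ with moduli $m_k = mk$, and says that the sum of the exponents in the corresponding factorization of $g$ is exactly $\fmaj(g)$. Therefore $\alpha$ takes the form
\[ \alpha = \sum_{\substack{g = \tilde t_n^{c_n}\cdots\tilde t_1^{c_1} \\ 0\le c_k < mk}} q^{c_n+\cdots+c_1}\cdot g, \]
which is precisely the element to which Corollary~\ref{cor:reg_basis} applies. Since $(\tilde t_n,\ldots,\tilde t_1)$ is a perfect basis, $o(\tilde t_k) = m_k = mk$, and $|\Sfrak_n^m| = n!\,m^n$, Corollary~\ref{cor:reg_basis} immediately yields
\[ \Delta_{\phi_{\reg}}(\alpha) = \prod_{k=1}^n (1-q^{mk})^{n!\,m^n\,(1-1/(mk))}, \]
as desired.

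There is essentially no obstacle in this proof: the two pieces of machinery (the perfect basis factorization for $\fmaj$ and the regular-representation determinant formula for perfect bases) have already been established in the preceding sections, so the argument is just a matter of matching notation. The only bookkeeping is to check that $o(\tilde t_k) = mk$, which is clear from the definition $\tilde t_k = (t_k,b)$ since $t_k$ has order $k$ and $b$ contributes a color cycle of length $m$ in the first coordinate as one iterates. Everything else is a direct substitution into Corollary~\ref{cor:reg_basis}.
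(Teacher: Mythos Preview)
Your proof is correct and follows essentially the same approach as the paper: define $\alpha=\sum_{g}q^{\fmaj(g)}g$, identify the determinant as $\Delta_{\phi_{\reg}}(\alpha)$, invoke Lemma~\ref{lem:fmaj_basis} to realize $\alpha$ in the form required by Corollary~\ref{cor:reg_basis}, and substitute $o(\tilde t_k)=mk$ and $|\Sfrak_n^m|=n!\,m^n$. There is no meaningful difference between your argument and the paper's.
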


\begin{proof}
  Set $\alpha=\sum q^{\fmaj(g)}\cdot g$. If $\phi_{\reg}$ is the regular representation of $\Sfrak_n^m$, then
\[ \phi_{\reg}(\alpha) = \left(q^{\fmaj(gh^{-1})}\right)_{g,h\in\Sfrak_n^m}. \]

By Lemma~\ref{lem:fmaj_basis} and Corollary~\ref{cor:reg_basis},
\begin{align*}
  \Delta_{\phi_{\reg}}(\alpha) &= \prod_{k=1}^n(1-q^{o(\tilde{t}_k)})^{|\Sfrak_n^m|(1-1/o(\tilde{t}_k))}\\
  &= \prod_{k=1}^n(1-q^{mk})^{n!m^n(1-1/(mk))}\qedhere
\end{align*}

\end{proof}

We identify $\Sfrak_n$ with the subgroup of colored permutations $\{(\mathbf{0},w)\in\Sfrak_n^m \mid w\in\Sfrak_n\}$. Observe that for $g\in\Sfrak_n^m$, we have $g\in\Sfrak_n$ if and only if $\col(g)=0$. If $h$ is any colored permutation, there is a unique ordering of the colored values of $h$ with no descents. That is, there is a unique $g\in\Sfrak_n$ such that $\maj(hg)=0$. Hence, the set $T=\{h\in\Sfrak_n^m \mid \maj(h)=0\}$ is a left transversal to $\Sfrak_n$ in $\Sfrak_n^m$.

\begin{lemma}\label{lem:fmaj_split}
  Let $T$ be the transversal to $\Sfrak_n$ in $\Sfrak_n^m$ defined above. Then
  \[ \sum p^{\maj(g)}q^{\col(g)}\cdot g = \left(\sum_{h\in T} q^{\col(h)}\cdot h\right)\left(\sum_{w\in\Sfrak_n} p^{\maj(w)}\cdot w\right). \]
\end{lemma}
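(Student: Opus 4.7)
The plan is to exploit the unique factorization $g = hw$ with $h \in T$ and $w \in \Sfrak_n$ afforded by the left transversal property of $T$. Expanding the product on the right hand side yields
\[ \sum_{h \in T}\sum_{w \in \Sfrak_n} p^{\maj(w)} q^{\col(h)} \cdot hw, \]
and by the transversal property this sum ranges (via $g = hw$) over all $g \in \Sfrak_n^m$ exactly once. To identify it with the left hand side, it suffices to establish the two statistical identities $\col(g) = \col(h)$ and $\maj(g) = \maj(w)$ for the factorization $g = hw$.

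The color identity will follow quickly from the semidirect product formula. Writing $h = (w_h, x_h)$ and viewing $w \in \Sfrak_n$ as an element of $\Sfrak_n^m$ with zero color part, the multiplication rule gives $g = hw = (w_h w, x_h \cdot w)$. Since $x_h \cdot w$ is just a permutation of the entries of $x_h$, summing coordinates yields $\col(g) = \col(h)$.

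For the major index identity, the same computation shows that the colored letter at position $i$ of $g$ coincides with the colored letter at position $w(i)$ of $h$: both are the value $w_h(w(i))$ bearing the color $(x_h)_{w(i)}$. Because $h \in T$ has $\maj(h) = 0$, the colored letters of $h$ are strictly increasing in the total order on colored integers, so position $i$ is a descent of $g$ precisely when $w(i) > w(i+1)$, i.e., when $i$ is a descent of $w$. Hence $\Des(g) = \Des(w)$ and $\maj(g) = \maj(w)$.

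The step that requires the most care is correctly identifying the colored letter at each position after the semidirect product multiplication; once that is done, the major index identity reduces to the elementary observation that reordering a sorted sequence by $w$ produces descents exactly at the descents of $w$.
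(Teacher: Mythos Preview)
Your proof is correct and follows essentially the same approach as the paper: expand the right-hand side using the unique factorization $g=hw$ coming from the transversal, then verify $\col(g)=\col(h)$ and $\maj(g)=\maj(w)$. Your argument is slightly more explicit than the paper's, writing out the semidirect product formula to identify the colored letter at each position, whereas the paper simply observes that right multiplication by $w\in\Sfrak_n$ rearranges colored values without altering them.
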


\begin{proof}
  We first expand the right--hand side of the equation. Then
  \begin{align*}
    \left(\sum_{h\in T} q^{\col(h)}\cdot h\right)\left(\sum_{w\in\Sfrak_n} p^{\maj(w)}\cdot w\right) &= \sum_{g\in\Sfrak_n^m} p^{\maj(w)}q^{\col(h)}\cdot g,
  \end{align*}
  where in the latter sum, $g=hw,\ h\in T$, and $w\in\Sfrak_n$. Fix $g\in\Sfrak_n^m$, and decompose $g=hw$ accordingly. Since multiplication by $w$ on the right rearranges colors without changing their values, it is clear that $\col(g)=\col(h)$. On the other hand, since the colored values of $h$ are in increasing order, it follows that $w$ and $g$ have the same descents. Hence, $\maj(g)=\maj(w)$. We conclude that $p^{\maj(w)}q^{\col(h)}=p^{\maj(g)}q^{\col(g)}$, as desired.
\end{proof}

Let $H$ be a subgroup of $G$. For $g\in G$, the subgroup $H$ acts on the right as the regular representation on the vector space $\Qbb[gH]$. Hence, the restriction of the regular representation of $G$ is isomorphic to a direct sum of $[G:H]$ copies of the regular representation of $H$.

\begin{theorem}
  For $m,n\geq 1$,
  \[ \det\left(p^{\maj(gh^{-1})}q^{\col(gh^{-1})}\right)_{g,h\in\Sfrak_n^m} = \prod_{k=2}^n(1-p^k)^{n!m^n(k-1)/k}\prod_{k=1}^n(1-q^{mk})^{n!m^{n-1}(m-1)/k}. \] 
\end{theorem}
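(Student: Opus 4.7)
The plan is to compute the determinant as $\det \phi_{\reg}(\alpha)$ for $\alpha = \sum_{g\in \Sfrak_n^m} p^{\maj(g)} q^{\col(g)} g$, and to exploit the factorization $\alpha = \beta\gamma$ provided by Lemma~\ref{lem:fmaj_split}: here $\beta = \sum_{h\in T} q^{\col(h)} h$ depends only on $q$, while $\gamma = \sum_{w\in\Sfrak_n} p^{\maj(w)} w$ depends only on $p$ and is supported on the subgroup $\Sfrak_n \subseteq \Sfrak_n^m$. By multiplicativity of the group determinant, the task splits into computing $\det \phi_{\reg}(\gamma)$ and $\det \phi_{\reg}(\beta)$ separately.

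For $\gamma$, I would combine Theorem~\ref{thm:main_maj} with the restriction formula stated just before the theorem: $\phi_{\reg}^{\Sfrak_n^m}$ restricted to $\Sfrak_n$ is the direct sum of $m^n$ copies of $\phi_{\reg}^{\Sfrak_n}$. This evaluation immediately produces the first product $\prod_{k=2}^n (1-p^k)^{n! m^n (k-1)/k}$ in the theorem.

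Since $T$ is only a transversal and not a subgroup, no analogue of Corollary~\ref{cor:reg_basis} applies to $\beta$ directly, and this is where the main work lies. The trick I would use is to specialize $p = q^m$ in the identity $\alpha = \beta\gamma$. This sends $\alpha$ to $\sum_g q^{\fmaj(g)} g$, whose determinant is given by Theorem~\ref{thm:fmaj}, and sends $\gamma$ to $\gamma' = \sum_w q^{m\maj(w)} w \in \Cbb[q]\Sfrak_n$, whose determinant again follows from the restriction argument applied to Theorem~\ref{thm:main_maj} with $q$ replaced by $q^m$. Since $\beta$ itself is independent of $p$, dividing the first determinant by the second extracts $\det \phi_{\reg}(\beta)$ as a polynomial in $\Cbb[q]$.

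Combining the two pieces and simplifying exponents produces the claimed formula. The main bookkeeping subtlety is the different starting indices in the two products: for $k=1$ only Theorem~\ref{thm:fmaj} contributes (since Theorem~\ref{thm:main_maj} begins at $k=2$), while for $k \ge 2$ the two exponents partially cancel to leave $n!m^n\bigl((1-1/(mk)) - (k-1)/k\bigr) = n! m^{n-1}(m-1)/k$ on the factor $(1-q^{mk})$. This careful cancellation is the only potential obstacle; everything else follows by direct substitution.
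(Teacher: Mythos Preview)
Your proposal is correct and follows essentially the same route as the paper: factor the group-ring element via Lemma~\ref{lem:fmaj_split}, evaluate the $\Sfrak_n$-supported factor directly using restriction and Theorem~\ref{thm:main_maj}, and recover the transversal factor's determinant by specializing $p=q^m$ and invoking Theorem~\ref{thm:fmaj}. Apart from swapped variable names (the paper calls your $\alpha$ by $\beta$ and writes the two factor determinants as $A(p)$ and $B(q)$), the argument and the exponent bookkeeping you describe match the paper's proof exactly.
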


\begin{proof}
  Let $\alpha=\sum q^{\fmaj(g)}\cdot g$ as in the proof of Theorem~\ref{thm:fmaj}. Let $\beta=\sum p^{\maj(g)}q^{\col(g)}\cdot g$ in $\Qbb(p,q)\Sfrak_n^m$. Then
  \[ \phi_{\reg}(\beta) = \left(p^{\maj(gh^{-1})}q^{\col(gh^{-1})}\right)_{g,h\in\Sfrak_n^m}, \]
  so we seek to prove that the right--hand side of the theorem statement is equal to $\Delta_{\phi_{\reg}}(\beta)$. By Lemma~\ref{lem:fmaj_split}, we have $\beta = \left(\sum_{h\in T} q^{\col(h)}\cdot h\right)\left(\sum_{w\in\Sfrak_n} p^{\maj(w)}\cdot w\right)$. Hence, there exists polynomials $A(p),B(q)$ such that $\Delta_{\phi_{\reg}}(\beta)=A(p)B(q)$. Since $\alpha=\beta\mid_{p=q^m}$, we have $\Delta_{\phi_{\reg}}(\alpha)=A(q^m)B(q)$, so
  \[ A(q^m)B(q) = \prod_{k=1}^n(1-q^{mk})^{n!m^n(1-1/(mk))}. \]

  Since $\Sfrak_n$ is a subgroup of $\Sfrak_n^m$ of index $m^n$, the restriction of the regular representation of $\Sfrak_n^m$ to $\Sfrak_n$ is isomorphic to a direct sum of $m^n$ copies of the regular representation of $\Sfrak_n$. Combined with Theorem~\ref{thm:main_maj}, we have
  \[ A(p) = \Delta_{\phi_{\reg}}\left(\sum_{w\in\Sfrak_n} p^{\maj(w)}\cdot w\right) = \prod_{k=2}^n (1-p^k)^{n!m^n (k-1)/k}. \]
  Therefore,
  \begin{align*}
    B(q) &= \frac{\Delta_{\phi_{\reg}}(\beta)}{A(q^m)}\\
    &= \frac{\prod_{k=1}^n(1-q^{mk})^{n!m^n(1-1/(mk))}}{\prod_{k=2}^n (1-q^{mk})^{n!m^n(k-1)/k}}\\
    &= \prod_{k=1}^n(1-q^{mk})^{n!m^{n-1}(m-1)/k}.
  \end{align*}
  The theorem now follows by multiplying the formulas for $A(p)$ and $B(q)$.
\end{proof}

\section{Absolute flag major index matrix}\label{sec:amaj}

In contrast with Section~\ref{sec:fmaj}, here we consider a simpler statistic that takes the descents of $(w,x)$ to be those of $w$. The \emph{absolute major index} of $(w,x)$ is $\amaj(w,x)=\maj(w)$. The \emph{absolute flag major index} of $(w,x)$ is $\amaj(w,x)+\col(w,x)$.
We prove the following identity. Krattenthaler conjectured the $m=2$ case in \cite[Conjecture 48]{krattenthaler2005advanced}.

\begin{theorem}\label{thm:amaj}
  For all $m,n\geq 1$, we have
  \[ \det\left(p^{\amaj(gh^{-1})}q^{\col(gh^{-1})}\right)_{g,h\in\Sfrak_n^m} = (1-q^m)^{n!m^{n-1}(m-1)n}\prod_{k=2}^n(1-p^k)^{n!m^n(k-1)/k}. \]
\end{theorem}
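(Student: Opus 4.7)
The plan is to construct a perfect basis of $\Sfrak_n^m$ of length $2n-1$ that decouples the $\amaj$ and $\col$ statistics, and then apply Theorem~\ref{thm:general_basis} to the regular representation. Unlike the flag major index basis of Lemma~\ref{lem:fmaj_basis}, where each generator combines a cycle with a color shift, here I would use one block of generators to record the permutation contribution to $\amaj$ and a separate block of commuting generators to record $\col$. Specifically, set $\tau_k = (t_k, \mathbf{0}) \in \Sfrak_n^m$ for $k = 2, \dots, n$, where $t_k \in \Sfrak_n$ is the cycle from Section~\ref{sec:maj}, and set $e_i = (\mathrm{id}, \epsilon_i) \in \Sfrak_n^m$ for $i = 1, \dots, n$, where $\epsilon_i$ is the $i$th standard basis vector of $(\Zbb/m\Zbb)^n$. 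The orders in $\Sfrak_n^m$ are $o(\tau_k) = k$ and $o(e_i) = m$.

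The first step is to check that $(\tau_n, \tau_{n-1}, \dots, \tau_2, e_1, \dots, e_n)$ is a perfect basis. Every element $(w, x) \in \Sfrak_n^m$ factors uniquely as $(w, \mathbf{0})(\mathrm{id}, x)$; Lemma~\ref{lem:maj_basis} applied inside $\Sfrak_n$ writes $(w, \mathbf{0}) = \tau_n^{c_n} \cdots \tau_2^{c_2}$ uniquely with $0 \le c_k < k$, and because the $e_i$ commute pairwise, $(\mathrm{id}, x) = e_1^{x_1} \cdots e_n^{x_n}$ uniquely with $0 \le x_i < m$. Crucially, the exponents directly encode the two statistics: $\amaj$ of the resulting product equals $c_n + \cdots + c_2$ by Lemma~\ref{lem:maj_basis} (since $\amaj$ depends only on the underlying permutation), and $\col$ equals $x_1 + \cdots + x_n$.

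With the basis in hand, I would apply Theorem~\ref{thm:general_basis} to the regular representation using independent variables $y_2, \dots, y_n$ attached to the $\tau_k$'s and $z_1, \dots, z_n$ attached to the $e_i$'s. Corollary~\ref{cor:det_reg} evaluates the denominators as $(1 - y_k^k)^{n! m^n / k}$ and $(1 - z_i^m)^{n! m^{n-1}}$, and specializing $y_k \mapsto p$ and $z_i \mapsto q$ collapses the group-algebra element to $\sum p^{\amaj(g)} q^{\col(g)} \cdot g$, whose image under $\phi_{\reg}$ is exactly the matrix in the theorem statement. The $n-1$ permutation factors contribute $\prod_{k=2}^n (1 - p^k)^{n! m^n (k-1)/k}$ and the $n$ color factors contribute $(1 - q^m)^{n! m^{n-1}(m-1) n}$, matching the right-hand side. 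The only substantive step is identifying the right basis; the rest is a direct specialization of the general framework, and there is no real obstacle once the basis structure and its exponent encoding of $(\amaj, \col)$ are in place.
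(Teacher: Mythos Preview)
Your proposal is correct and follows essentially the same approach as the paper: the paper also builds the perfect basis $(t_n,\dots,t_2,y^{(1)},\dots,y^{(n)})$ of $\Sfrak_n^m$ by concatenating the Adin--Roichman basis of $\Sfrak_n$ with the standard basis of $(\Zbb/m\Zbb)^n$, observes that the exponents encode $\amaj$ and $\col$ separately, and then evaluates the regular-representation determinant from the resulting factorization. The only cosmetic difference is that the paper computes the two blocks via restriction of $\phi_{\reg}$ to the subgroups $\Sfrak_n$ and $(\Zbb/m\Zbb)^n$, whereas you apply Theorem~\ref{thm:general_basis} and Corollary~\ref{cor:det_reg} directly to each basis element in $\Sfrak_n^m$; the arithmetic is identical.
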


To prove Theorem~\ref{thm:amaj}, we produce a different perfect basis of $\Sfrak_n^m$ than the one considered in Section~\ref{sec:fmaj}. The construction of this perfect basis can be formulated more generally as follows.

Let $H,N$ be groups such that $H$ acts on $N$ on the right. Consider the semidirect product $G=H\ltimes N$. We identify $H$ and $N$ with the subgroups $\{(h,1)\in G\mid h\in H\}$ and $\{(1,x)\in G\mid x\in N\}$, respectively. If $(h_1,\ldots,h_k)$ is a perfect basis of $H$ and $(x_1,\ldots,x_{\ell})$ is a perfect basis of $N$, then $(h_1,\ldots,h_k,x_1,\ldots,x_{\ell})$ is a perfect basis of $G$.

For $\Sfrak_n^m$ we combine our perfect basis for $\Sfrak_n$ with one for $\Zbb/m\Zbb^n$.
For each $i\in[n]$, let $y^{(i)}\in(\Zbb/m\Zbb)^n$ where
\[ (y^{(i)})_j=\begin{cases}1\ &\mbox{if }i=j\\0\ &\mbox{else}\end{cases}. \]
It is clear that $(y^{(1)},\ldots,y^{(n)})$ is a perfect basis of $(\Zbb/m\Zbb)^n$ since $x=\sum_i |x_i|y^{(i)}$ for all $x\in(\Zbb/m\Zbb)^n$. Hence, $(t_n,\ldots,t_2,y^{(1)},\ldots,y^{(n)})$ is a perfect basis of $\Sfrak_n^m$. Moreover, if $g=t_n^{c_n}\cdots t_2^{c_2}(y^{(1)})^{d_1}\cdots(y^{(n)})^{d_n}$ is the factorization of $g$, then $\amaj(g)=c_n+\cdots+c_2$ and $\col(g)=d_1+\cdots+d_n$.

\begin{proof}[Proof of Theorem~\ref{thm:amaj}]
Let $\phi_{\reg}$ be the regular representation of $\Sfrak_n^m$. The restriction of $\phi_{\reg}$ to $\Sfrak_n$ is isomorphic to a direct sum of $[\Sfrak_n^m:\Sfrak_n]=m^n$ copies of the regular representation of $\Sfrak_n$. The restriction to $(\Zbb/m\Zbb)^n$ is isomorphic to a direct sum of $[\Sfrak_n^m:(\Zbb/m\Zbb)^n]=n!$ copies of the regular representation of $(\Zbb/m\Zbb)^n$. We deduce the following sequence of identities.

\begin{align*}
  \det\left(p^{\amaj(gh^{-1})}q^{\col(gh^{-1})}\right)_{g,h\in\Sfrak_n^m} &= \Delta_{\phi_{\reg}}\left(\sum_{g\in\Sfrak_n^m} p^{\amaj(g)}q^{\col(g)}\cdot g\right)\\
  &= \Delta_{\phi_{\reg}}\left(\sum_{w\in\Sfrak_n}p^{\maj(w)}\cdot w\right)\Delta_{\phi_{\reg}}\left(\sum_{x\in(\Zbb/m\Zbb)^n} q^{\col(x)}\cdot x\right)\\
  &= \prod_{k=2}^n(1-p^k)^{n!m^n (k-1)/k}\prod_{i=1}^n(1-q^m)^{n!m^{n-1}(m-1)}\qedhere
\end{align*}
\end{proof}


\section{Signed permutations}\label{sec:signed}

A \emph{signed permutation} is a pair $(\veps,w)$ where $w\in\Sfrak_n$ and $\veps\in\{-1,1\}^n$. We refer to $\veps$ as the sign vector of the signed permutation $(\veps,w)$. The symmetric group acts on the set of sign vectors on the left such that for $w\in\Sfrak_n,\ \veps\in\{-1,1\}^n$, $(w\cdot\veps)_i = \veps_{w^{-1}(i)}$ for all $i$. Let $B_n$ be the group of signed permutations, i.e. the semidirect product $\{1,-1\}^n\rtimes\Sfrak_n$ where $(\veps,u)(\veps^{\pr},v)=(\veps(u\cdot\veps^{\pr}),uv)$. This is also known in the literature as the hyperoctahedral group since it is isomorphic to the group of symmetries of a hyperoctahedron.

We may write a signed permutation in one--line notation with a bar above a value $i$ if $\veps_i=-1$. For example, the signed permutation
\[ \left((1, -1, -1, 1),\ \binom{1\ 2\ 3\ 4}{2\ 1\ 4\ 3}\right) \]
would be written as $\bar{2}14\bar{3}$. We refer to elements of $\{1,2,3,\ldots,\bar{1},\bar{2},\bar{3},\ldots\}$ as signed letters.

We consider two total orderings on signed letters. The first ordering is the natural ordering on integers,
\[ \cdots <_A \bar{n} <_A \cdots <_A \bar{1} <_A 0 <_A 1 <_A \cdots  <_A n <_A \cdots. \]
The second ordering is
\[ 0 <_B 1 <_B \cdots <_B n <_B \cdots <_B \bar{n} <_B \cdots <_B \bar{2} <_B \bar{1}. \]

For $i\in[n-1]$, we say $i$ is \emph{A--descent} of a signed permutation $(\veps,w)=w_1\cdots w_n$ if $w_i >_A w_{i+1}$. Furthermore, $0$ is an A--descent if $w_1$ is negative. Similarly, $i$ is a \emph{B--descent} of $w=w_1\cdots w_n$ if $w_i >_B w_{i+1}$. Furthermore, $n$ is a B--descent if $w_n$ is negative. Let $\maj_A(\veps,w)$ (respectively, $\maj_B(\veps,w)$) be the sum of the A--descents (respectively, B--descents) of $(\veps,w)$. 

The \emph{negative set} is $\Neg(\veps,w)=\{i \mid \veps_i=-1\}$. Let $\nneg(\veps,w)=|\Neg(\veps,w)|$, and let $\sneg(\veps,w)$ be the sum of elements in $\Neg(\veps,w)$.

For example, $\{0,3\}$ is the set of A--descents of $\bar{2}14\bar{3}$, so $\maj_A(\bar{2}14\bar{3})=3$. The set of B--descents of $\bar{2}14\bar{3}$ is $\{1,4\}$, so $\maj_B(\bar{2}14\bar{3})=5$. The negative set is $\Neg(\bar{2}14\bar{3})=\{2,3\}$, so $\nneg(\veps,w)=2$ and $\sneg(\veps,w)=5$.

The statistic $\maj_B$ was introduced by Reiner in \cite{reiner1993signed}. The statistics $\maj_A$ and $\sneg$ were used by Adin, Brenti, and Roichman in \cite{adin2001descent} to prove a Carlitz-type formula for a joint Euler--Mahonian distribution in Type $B_n$.

The statistics $\maj_A,\ \maj_B$, and $\nneg$ are related as follows.

\begin{lemma}\label{lem:majB}
  For any signed permutation $(\veps,w)$,
  \[ \maj_B(\veps,w) = \maj_A(\veps,w) + \nneg(\veps,w). \]
\end{lemma}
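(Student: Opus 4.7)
The plan is to compare the A- and B-descent sets position by position. For each $i \in [n]$ set $s_i = 1$ if $w_i$ is a barred letter and $s_i = 0$ otherwise, so that $\nneg(\veps,w) = s_1 + \cdots + s_n$. Direct inspection of the orderings $<_A$ and $<_B$ shows they agree on every pair of letters of the same sign (negatives are ordered the same way in both orderings, and positives are ordered the usual way in both), and disagree on every pair of opposite signs. Hence for $i \in [n-1]$, a four-case check on $(s_i, s_{i+1})$ yields the per-position identity
\begin{align*}
[w_i >_B w_{i+1}] - [w_i >_A w_{i+1}] = s_i - s_{i+1}.
\end{align*}
The boundary conventions give that position $0$ is an A-descent exactly when $s_1 = 1$ (contributing $0$ to $\maj_A$) and that position $n$ is a B-descent exactly when $s_n = 1$ (contributing $n s_n$ to $\maj_B$).

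Assembling these contributions yields
\begin{align*}
\maj_B(\veps,w) - \maj_A(\veps,w) = n s_n + \sum_{i=1}^{n-1} i\,(s_i - s_{i+1}),
\end{align*}
and I would finish by Abel summation. Splitting the sum and re-indexing $j = i+1$ in the second piece collapses the telescoped terms into $\sum_{i=1}^{n-1} s_i - (n-1) s_n$; adding the boundary $n s_n$ then gives $\sum_{i=1}^n s_i = \nneg(\veps,w)$, as required. The only delicate step is the asymmetric boundary bookkeeping at positions $0$ and $n$: the A-side contribution at $0$ has weight $0$ and drops out automatically, while the $n s_n$ from the B-side at position $n$ is precisely what absorbs the $-(n-1)s_n$ leftover from the telescoping. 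Apart from that, the argument is a routine case check followed by summation by parts.
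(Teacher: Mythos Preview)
Your argument is correct. The four-case check establishes the pointwise identity $[w_i >_B w_{i+1}] - [w_i >_A w_{i+1}] = s_i - s_{i+1}$ for $i\in[n-1]$, the boundary contributions are handled properly (the $0$-position A-descent has weight $0$, while the $n$-position B-descent contributes $ns_n$), and the Abel summation telescopes cleanly to $\sum_{i=1}^n s_i$.

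The paper takes a different, more combinatorial route: it augments the word with sentinels $w_0=0$ and $w_{n+1}=n+1$, and then constructs an explicit bijection $\phi$ from the set $X$ of A-descents to the set $Y$ of B-descents (fixing $i$ when $w_i,w_{i+1}$ share a sign, and otherwise sending $i$ to the next index $j>i$ with $w_{j+1}$ positive). The lemma then follows from the displacement identity $\sum_{i\in X}(\phi(i)-i)=\nneg(\veps,w)$. Compared to this, your approach is more elementary and self-contained: it avoids the need to verify that $\phi$ is well-defined and bijective, reducing everything to a routine case analysis and a telescoping sum. The paper's bijection, on the other hand, gives a slightly more structural picture, pairing each A-descent with a specific B-descent and interpreting $\nneg$ as the total displacement. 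Both proofs are short; yours is arguably the cleaner one to write out in full.
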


\begin{proof}
  Let $(\veps,w)=w_1\cdots w_n$ in one--line notation. Set $w_0=0$ and $w_{n+1}=n+1$. Then for $i\in\{0,1,\ldots,n\}$, $i$ is an A--descent if $w_i >_A w_{i+1}$ and $i$ is a B--descent if $w_i >_B w_{i+1}$. Let $X$ be the set of A--descents and $Y$ be the set of B--descents of $(\veps,w)$. There is a bijection $\phi:X\ra Y$ where $\phi(i)=i$ if $w_i$ and $w_{i+1}$ have the same sign, and $\phi(i)=\min\{j\mid j>i,\ w_{j+1}>0\}$ if $w_i$ and $w_{i+1}$ have different signs. We observe the identity
  \[ \sum_{i\in X}\phi(i)-i = \nneg(\veps,w), \]
  from which the lemma follows.
\end{proof}

For $X\subseteq[n]$, let $q_X=\prod_{i\in X}q_i$. We prove the following identity.

\begin{theorem}\label{thm:signed_general}
For all $n\geq 1$,
\[ \det\left(p^{\maj_A(gh^{-1})}q_{\Neg(gh^{-1})}\right)_{g,h\in B_n} = \prod_{k=1}^n(1-q_k^{2k})^{n!2^{n-1}/k}\prod_{k=2}^{n}(1-p^k)^{n!2^n (k-1)/k}. \]
\end{theorem}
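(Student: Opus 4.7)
The plan is to prove the group-algebra identity
\[
\alpha \;:=\; \sum_{g \in B_n} p^{\maj_A(g)} q_{\Neg(g)}\, g \;=\; \left(\prod_{k=1}^n (1 + q_k \tilde{t}_k)\right) \cdot \beta
\]
in $\Qbb(p, q_1, \ldots, q_n) B_n$, where $\tilde{t}_k \in B_n = \Sfrak_n^2$ is the order-$2k$ element from Section~\ref{sec:fmaj}, the product is taken with $k$ increasing, and $\beta := \sum_{w \in \Sfrak_n} p^{\maj(w)}\, w$ is regarded as an element of the same group algebra via the inclusion $\Sfrak_n \hookrightarrow B_n$. Once this identity is established, applying $\Delta_{\phi_{\reg}}$ and using multiplicativity gives the theorem: by Corollary~\ref{cor:det_reg}, $\Delta_{\phi_{\reg}}(1 + q_k \tilde{t}_k) = \det(I + q_k \phi_{\reg}(\tilde{t}_k)) = (1 - q_k^{2k})^{|B_n|/(2k)} = (1 - q_k^{2k})^{n!\,2^{n-1}/k}$, using that $2k$ is even so $(-q_k)^{2k} = q_k^{2k}$. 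Since the restriction $\phi_{\reg}^{B_n}|_{\Sfrak_n}$ is a direct sum of $2^n$ copies of $\phi_{\reg}^{\Sfrak_n}$, Theorem~\ref{thm:main_maj} yields $\Delta_{\phi_{\reg}}(\beta) = \prod_{k=2}^n (1 - p^k)^{n!\,2^n(k-1)/k}$, and multiplying these together produces the right-hand side of the theorem.

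To prove the identity, for each $S = \{k_1 < \cdots < k_r\} \subseteq [n]$ I set $\tilde{t}_S := \tilde{t}_{k_1} \tilde{t}_{k_2} \cdots \tilde{t}_{k_r}$. A short induction on $|S|$, using that $t_j$ fixes every index strictly greater than $j$ (so the action of $t_j$ on a sign vector supported in $\{j+1, \ldots, n\}$ is trivial), shows that $\tilde{t}_S = (\veps_S, u_S)$, where $\veps_S$ has $-1$ precisely at the coordinates in $S$ and $u_S := t_{k_1} t_{k_2} \cdots t_{k_r} \in \Sfrak_n$. Consequently $\Neg(\tilde{t}_S w) = S$ for every $w \in \Sfrak_n$, and since $|2^{[n]}| \cdot |\Sfrak_n| = |B_n|$ the map $(S, w) \mapsto \tilde{t}_S w$ is a bijection $2^{[n]} \times \Sfrak_n \to B_n$. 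Writing $g = \tilde{t}_S w$ with $S = \Neg(g)$, and expanding $\prod_k (1 + q_k \tilde{t}_k) = \sum_{S \subseteq [n]} q_S\, \tilde{t}_S$, the right-hand side of the proposed identity becomes $\sum_{S, w} q_S\, p^{\maj(w)}\, \tilde{t}_S w$, and the identity reduces to the equality $\maj_A(g) = \maj(w)$.

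The main obstacle is this last equality, which I would prove by identifying $w$ as the standardization of the signed one-line notation of $g$ with respect to $<_A$. A second short induction on $|S|$ shows that $u_S$ has one-line notation equal to the elements of $S$ listed in decreasing order, followed by the elements of $[n] \setminus S$ listed in increasing order; consequently $u_S^{-1}$ sends $k_j \in S$ to $r - j + 1$ and sends the $\ell$-th smallest element of $[n] \setminus S$ to $r + \ell$. Letting $m_1 < m_2 < \cdots$ enumerate $[n] \setminus S$ in increasing order, the signed values of $g$ listed in $<_A$ order are $-k_r, -k_{r-1}, \ldots, -k_1, m_1, m_2, \ldots$, so the $<_A$-rank of the signed value at position $i$ of $g$ equals $u_S^{-1}$ applied to the unsigned value at that position, which is $w(i)$. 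Because standardization is order-preserving, the A-descents of $g$ at positions in $[n-1]$ agree with the ordinary descents of $w$ at the same positions; the potential A-descent at position $0$ contributes $0$ to $\maj_A$ and is therefore irrelevant, giving $\maj_A(g) = \maj(w)$ as desired.
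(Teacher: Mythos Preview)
Your proof is correct and follows essentially the same route as the paper. Your elements $\tilde{t}_k$ coincide with the paper's $s_k=(\veps^{(k)},t_k)$, and your factorization $\alpha=\bigl(\prod_{k}(1+q_k\tilde{t}_k)\bigr)\beta$ is exactly the paper's factorization coming from the basis $(s_1,\ldots,s_n,u_n,\ldots,u_2)$ of Lemma~\ref{lem:sneg_basis}, with the $\Sfrak_n$-part left unfactored; the paper further splits $\beta$ via $(u_n,\ldots,u_2)$, whereas you invoke Theorem~\ref{thm:main_maj} directly, which is equivalent. Your standardization argument that $\maj_A(\tilde{t}_S w)=\maj(w)$ is a slightly more explicit version of the paper's observation that $T=\{h:\maj_A(h)=0\}$ is a left transversal with $\maj_A((\veps,u)v)=\maj(v)$ for $(\veps,u)\in T$.
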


Specializing Theorem~\ref{thm:signed_general} gives the following identities, which are Conjectures~46,~47, and~50 in \cite{krattenthaler2005advanced}.

\begin{corollary}\label{cor:signed_spec}
  If $n\geq 1$, then
  \begin{align*}
    \det\left(p^{\maj_A(gh^{-1})}q^{\nneg(gh^{-1})}\right)_{g,h\in B_n} &= \prod_{k=1}^n(1-q^{2k})^{n!2^{n-1}/k}\prod_{k=2}^{n}(1-p^k)^{n!2^n (k-1)/k},\\
    \det\left(q^{\maj_B(gh^{-1})}\right)_{g,h\in B_n} &= \prod_{k=1}^n(1-q^{2k})^{n!2^{n-1}/k}\prod_{k=2}^{n}(1-q^k)^{n!2^n (k-1)/k},\ \mbox{and}\\
    \det\left(p^{\maj_A(gh^{-1})}q^{\sneg(gh^{-1})}\right)_{g,h\in B_n} &= \prod_{k=1}^n(1-q^{2k^2})^{n!2^{n-1}/k}\prod_{k=2}^{n}(1-p^k)^{n!2^n (k-1)/k}.
  \end{align*}
\end{corollary}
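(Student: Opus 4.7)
The plan is to deduce all three identities as specializations of the multivariate Theorem~\ref{thm:signed_general}, together with Lemma~\ref{lem:majB} for the middle identity. The key observation is that the three statistics $\nneg$, $\maj_B$, and $\sneg$ appearing on the left-hand sides can each be obtained from the weighted monomial $q_{\Neg(gh^{-1})} = \prod_{i \in \Neg(gh^{-1})} q_i$ by choosing the variables $q_1,\ldots,q_n$ appropriately.

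First, to prove the $\nneg$-weighted identity, I would set $q_1 = q_2 = \cdots = q_n = q$ in Theorem~\ref{thm:signed_general}. Under this substitution, $q_{\Neg(gh^{-1})}$ collapses to $q^{|\Neg(gh^{-1})|} = q^{\nneg(gh^{-1})}$, and each factor $(1-q_k^{2k})$ on the right becomes $(1-q^{2k})$, giving the first formula verbatim.

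Next, for the pure $\maj_B$ identity, I would take $p = q$ in the just-established $\nneg$-weighted identity and invoke Lemma~\ref{lem:majB}. Since $\maj_B = \maj_A + \nneg$ as functions on $B_n$, the matrix entry $q^{\maj_A(gh^{-1})} q^{\nneg(gh^{-1})}$ becomes $q^{\maj_B(gh^{-1})}$, while the two product factors on the right assemble into the stated expression. Finally, for the $\sneg$-weighted identity, I would instead specialize $q_i \mapsto q^i$ in Theorem~\ref{thm:signed_general}. Then $q_{\Neg(gh^{-1})} = \prod_{i \in \Neg(gh^{-1})} q^i = q^{\sum_{i \in \Neg(gh^{-1})} i} = q^{\sneg(gh^{-1})}$, while $(1-q_k^{2k})$ becomes $(1-q^{2k^2})$, yielding the third formula.

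The main effort in this corollary is purely bookkeeping: verifying that each substitution transforms both sides correctly. There is no genuine obstacle, since the full combinatorial work is hidden in Theorem~\ref{thm:signed_general} and in the elementary identity $\maj_B = \maj_A + \nneg$ from Lemma~\ref{lem:majB}. One minor point worth double-checking is the exponent $2k^2$ in the third identity, which arises because the $q_k$ in $(1-q_k^{2k})$ already has index $k$ and is being replaced by $q^k$, so the exponent of $q$ acquires an extra factor of $k$.
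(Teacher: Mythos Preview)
Your proposal is correct and matches the paper's proof essentially line for line: the paper also derives the first identity by setting $q_i=q$, the second from the first by setting $p=q$, and the third by setting $q_i=q^i$. Your version is in fact slightly more explicit in invoking Lemma~\ref{lem:majB} for the middle identity, which the paper leaves implicit.
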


\begin{proof}
  For $g\in B_n$, $q_{\Neg(g)}$ specializes to $q^{\nneg(g)}$ by setting $q_i=q$ for all $i$. This gives the first identity. The second follows from the first by setting $p=q$. For the third identity, we observe that $q_{\Neg(g)}$ specializes to $q^{\sneg(g)}$ by setting $q_i=q^i$ for all $i$.
\end{proof}

To prove Theorem~\ref{thm:signed_general}, we construct a certain basis for $B_n$. This basis is not perfect for $n\geq 2$, but it is ``close enough'' for our purposes.

Let $\veps^{(k)}\in\{1,-1\}^n$ where $(\veps^{(k)})_j=-1$ if $k=j$ and $(\veps^{(k)})_j=1$ if $k\neq j$. We again let $t_k=(k,k-1,\ldots,1)$ be a $k$-cycle. Let $s_k=(\veps^{(k)},t_k)$ and $u_k=(\mathbf{1},t_k)$ be signed and unsigned versions of $t_k$, respectively.

\begin{lemma}\label{lem:sneg_basis}
  Let $n\geq 1$ be given. The sequence $(s_1,\ldots,s_n,u_n,u_2)$ is a basis of $B_n$. In particular, every signed permutation $g$ may be uniquely expressed in the form $g=s_1^{d_1}\cdots s_n^{d_n}u_n^{c_n}\cdots u_2^{c_2}$ where $0\leq d_k < 2$ and $0\leq c_k < k$ for all $k$. Moreover, $\maj_A(g)=\maj(t_n^{c_n}\cdots t_2^{c_2})$ and $\Neg(g)=\{i\mid d_i=1\}$.
\end{lemma}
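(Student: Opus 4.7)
The plan is to show that the map $(d_1,\ldots,d_n,c_n,\ldots,c_2)\mapsto s_1^{d_1}\cdots s_n^{d_n}u_n^{c_n}\cdots u_2^{c_2}$ is a bijection from $\{0,1\}^n\times\prod_{k=2}^n\{0,\ldots,k-1\}$ onto $B_n$, and then to read off $\maj_A(g)$ and $\Neg(g)$ from the one--line notation of the image. The domain has cardinality $2^n\cdot n!=|B_n|$, so it suffices to prove existence of such a factorization; uniqueness follows for free. The strategy is to peel off the sign part first via the $s_k$'s, reducing the remainder to a pure $\Sfrak_n$ decomposition handled by Lemma~\ref{lem:maj_basis}.

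The key step is a \emph{Sign Claim}: with $D=\{k:d_k=1\}$, the element $s_1^{d_1}\cdots s_n^{d_n}$ equals $(\veps_D,\sigma_D)$, where $\veps_D$ has $-1$ exactly at the positions in $D$ and $\sigma_D=t_1^{d_1}\cdots t_n^{d_n}$. I would prove this by induction on $n$. The inductive step multiplies the known product $(\veps',\sigma')$ in $B_{n-1}$ on the right by $s_n^{d_n}$; the only nontrivial case is $d_n=1$, where the product is $(\veps'\cdot(\sigma'\cdot\veps^{(n)}),\sigma' t_n)$. Since $\sigma'$ is a product of $t_k$ for $k\le n-1$, it fixes $n$, and hence $(\sigma'\cdot\veps^{(n)})_i=\veps^{(n)}_{(\sigma')^{-1}(i)}=-1$ iff $i=\sigma'(n)=n$. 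Therefore $\sigma'\cdot\veps^{(n)}=\veps^{(n)}$, and the new sign vector has $-1$ at $\{k\le n-1:d_k=1\}\cup\{n\}$, as claimed. Given the Sign Claim, existence is immediate: for $g=(\veps,u)\in B_n$, set $d_k=1$ iff $\veps_k=-1$; then $(s_1^{d_1}\cdots s_n^{d_n})^{-1}g$ has trivial sign part, so it lies in $\Sfrak_n\subseteq B_n$, and Lemma~\ref{lem:maj_basis} provides the unique factorization $u_n^{c_n}\cdots u_2^{c_2}$.

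To deduce the statistical identities I would run a parallel induction establishing an explicit formula for the one--line notation of $s=s_1^{d_1}\cdots s_n^{d_n}$: writing $D=\{k_1<\cdots<k_r\}$ and $[n]\setminus D=\{m_1<\cdots<m_{n-r}\}$, the one--line is $\bar{k}_r,\bar{k}_{r-1},\ldots,\bar{k}_1,m_1,\ldots,m_{n-r}$. The inductive step with $d_n=1$ uses that $s_n(1)=-n$ and $s_n(i)=i-1$ for $i\ge 2$, together with the fact that the partial product fixes $n$ positively in its one--line. Crucially, this sequence is precisely the A--increasing ordering of the signed letters $\{-k_r,\ldots,-k_1,m_1,\ldots,m_{n-r}\}$. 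Setting $w=t_n^{c_n}\cdots t_2^{c_2}$ and $h=(\mathbf{1},w)$, the product $g=s\cdot h$ satisfies $g(i)=s(w(i))$, so position $i$ of $g$'s one--line equals the $w(i)$-th entry of $s$'s one--line. Because that entry sequence is A--increasing, $g(i)>_A g(i+1)$ is equivalent to $w(i)>w(i+1)$; the A--descents of $g$ in $[n-1]$ coincide with the ordinary descents of $w$, and a possible A--descent at position $0$ contributes $0$ to $\maj_A$. Thus $\maj_A(g)=\maj(w)$, and the barred values in the one--line of $g$ are exactly the elements of $D$, giving $\Neg(g)=\{i:d_i=1\}$.

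The main technical subtlety is controlling the semidirect--product twist $\sigma\cdot\veps$ when assembling the $s_k$'s. The observation that each partial product $s_1^{d_1}\cdots s_{k-1}^{d_{k-1}}$ has underlying permutation fixing $\{k,k+1,\ldots,n\}$ is what both makes the Sign Claim's inductive step trivial and forces the one--line of $s$ to be A--sorted; without it, tracking signs across products and relating A--descents to descents of $w$ would be considerably messier.
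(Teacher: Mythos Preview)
Your proof is correct and follows essentially the same approach as the paper: both establish by induction that $s_1^{d_1}\cdots s_n^{d_n}$ has negative set $\{i:d_i=1\}$ and is $A$-sorted (you via an explicit one--line formula, the paper via the transversal $T=\{h:\maj_A(h)=0\}$), then invoke Lemma~\ref{lem:maj_basis} for the $u$-part and finish with the cardinality count $2^n\cdot n!=|B_n|$. Your argument is slightly more computational---writing down the exact one--line notation rather than just asserting $\maj_A=0$---but the structure and key observations (notably that the partial product fixes $n$, forcing $\sigma'\cdot\veps^{(n)}=\veps^{(n)}$) are identical.
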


\begin{proof}
  Let $g=(\veps,w)\in B_n$. For $v\in\Sfrak_n$, we have $(\veps,w)(\mathbf{1},v)=(\veps,wv)$. That is, right multiplication by an element $(\mathbf{1},v)$ rearranges the positions of the signed integers in $g$ without changing the set of signed integers present.

  Let $h=(\veps,u)$ be the rearrangement of signed integers in $g$ in increasing order relative to $<_A$. Then $h$ is the unique element in the left coset $g\Sfrak_n$ such that $\maj_A(h)=0$. Furthermore, for $v\in\Sfrak_n$, we have $\maj_A(\veps,uv)=\maj(v)$. In particular, $\maj_A(\veps,w) = \maj(u^{-1}w)$. By Lemma~\ref{lem:maj_basis}, there is a unique factorization $u^{-1}w=t_n^{c_n}\cdots t_2^{c_2}$ where $0\leq c_k<k$ for all $k$, and $\maj(u^{-1}w)=c_n+\cdots+c_2$.

  We have seen that the set $T=\{h\in B_n\mid \maj_A(h)=0\}$ is a left transversal to $\Sfrak_n$ in $B_n$. Since $[B_n:\Sfrak_n]=2^n$, we have $|T|=2^n$. To complete the proof, we show that each element $h\in T$ is uniquely expressible in the form $h=s_1^{d_1}\cdots s_n^{d_n}$ with $0\leq d_i<2$ for all $i$, and $\Neg(h)=\{i\mid d_i=1\}$.

  Let $0\leq d_i<2$ for all $i$, and let $(\veps,u)=s_1^{d_1}\cdots s_{n-1}^{d_{n-1}}$. Then $u$ fixes $n$, and by induction, we may assume $\maj_A(\veps,u)=0$ and $\Neg(\veps,u)=\{i\mid d_i=1, i<n\}$. But $s_1^{d_1}\cdots s_n^{d_n}=(\veps,u)(\veps^{(n)},t_n)=(\veps+\veps^{(n)},ut_n)$. Multiplying $u$ on the right by $t_n$ rotates the values of $u$ and puts $n$ at the beginning. Since $n\in\Neg(s_1^{d_1}\cdots s_n^{d_n})$, the signed values of $s_1^{d_1}\cdots s_n^{d_n}$ are still in increasing order, i.e. $\maj_A(s_1^{d_1}\cdots s_n^{d_n})=0$.

  Hence, $\{s_1^{d_1}\cdots s_n^{d_n}\mid \forall i,\ 0\leq d_i<2\}\subseteq T$. Since both sets contain $2^n$ elements, they must be equal.
\end{proof}

\begin{proof}[Proof of Theorem~\ref{thm:signed_general}]
  Let $\alpha=\sum_{g\in B_n} p^{\maj_A(g)}q_{\Neg(g)}\cdot g$. If $\phi_{\reg}$ is the regular representation of $B_n$, then
  \[ \Delta_{\phi_{\reg}}(\alpha)= \det\left(p^{\maj_A(gh^{-1})}q_{\Neg(gh^{-1})}\right)_{g,h\in B_n}. \]

  By Lemma~\ref{lem:sneg_basis}, we obtain a factorization
  \[ \alpha = (1+q_1s_1)\cdots(1+q_ns_n)(1+pu_n+\cdots+p^{n-1}u_n)\cdots(1+pu_2). \]
  Therefore,
  \begin{align*}
    \Delta_{\phi_{\reg}}(\alpha) &= \prod_{k=1}^n\Delta_{\phi_{\reg}}(1+q_1s_1) \prod_{k=2}^n\Delta_{\phi_{\reg}}(1+pu_k+\cdots+p^{k-1}u_k)\\
    &= \prod_{k=1}^n\theta_{\phi_{\reg}(s_k)}(-q_k) \prod_{k=2}^n\frac{(1-p^k)^{|B_n|}}{\theta_{\phi_{\reg}(u_k)}(p)}
  \end{align*}

  The order of $u_k=(\mathbf{1},t_k)$ is $k$ and the order of $s_k=(\veps^{(k)},t_k)$ is $2k$. Hence,
  \[ \Delta_{\phi_{\reg}}(\alpha) = \prod_{k=1}^n(1-(-q_k)^{2k})^{n!2^n/(2k)} \prod_{k=2}^n(1-p^k)^{n!2^n(1-1/k)}. \]
\end{proof}

\section*{Acknowledgements}

The authors thank Victor Reiner and Volkmar Welker for initial discussions on this project. They also thank Martin Rubey for his help in investigating related identities for a wide variety of statistics on permutations. The mathematical software SageMath and Mathematica were used to investigate the identities considered in this paper. C.\ Smyth was supported by Simons Collaboration Grant number 360486 during the preparation of this work.

\bibliographystyle{alpha}
\bibliography{bib_det}{}

\end{document}